\numberwithin{equation}{section} 
\newcommand{\R}{\ensuremath{\mathbb{R}}}
\newcommand{\N}{\ensuremath{\mathbb{N}}}
\newcommand{\eps}{\varepsilon}
\newcommand{\cC}{\mathcal{C}}
\newcommand{\cF}{\mathcal{F}}
\newcommand{\mP}{\mathbb{P}}
\newcommand{\Z}{\mathbb{Z}}
\newcommand{\cD}{\mathcal{D}}
\newcommand{\ltn}{\ensuremath{\left| \! \left| \! \left|}}
\newcommand{\rtn}{\ensuremath{\right| \! \right| \! \right|}}
\newtheorem{theorem}{Theorem}[section]
{ \theorembodyfont{\normalfont} 
	\newtheorem{example}[theorem]{Example}
	\newtheorem{remark}[theorem]{Remark}
}
\newtheorem{definition}[theorem]{Definition}
\newtheorem{lemma}[theorem]{Lemma}
\newtheorem{corollary}[theorem]{Corollary}
\newcounter{enumctr}
\newenvironment{enum}{\begin{list}{(\roman{enumctr})}{\usecounter{enumctr}}}{\end{list}}
\begin{document}

\title{Asymptotic stability for stochastic dissipative systems with a H\"older noise}

\author{Luu Hoang Duc\thanks{Max Planck Institute for Mathematics in the Sciences,
Inselstr. 22, 04103 Leipzig, Germany,
\& Institute of Mathematics, Viet Nam Academy of Science and Technology,
Hoang Quoc Viet str. 18, 10307 Ha Noi, Viet Nam
{\tt\small duc.luu@mis.mpg.de, lhduc@math.ac.vn}
}, $\;$ Phan Thanh Hong
\thanks{Thang Long University, Hanoi, Vietnam
{\tt\small hongpt@thanglong.edu.vn}
}, $\;$ 
Nguyen Dinh Cong
\thanks{Institute of Mathematics, Viet Nam Academy of Science and Technology,
Hoang Quoc Viet str. 18, 10307 Ha Noi, Viet Nam
{\tt\small ndcong@math.ac.vn}
}
}

\date{}
\maketitle

\begin{abstract}
    We prove the exponential stability of the zero solution of a stochastic differential equation with a H\"older noise, under the strong dissipativity assumption. As a result, we also prove that there exists a random pullback attractor for a stochastic system under a multiplicative fractional Brownian noise.   
\end{abstract}

\textbf{Keywords}: fractional Brownian motion, stochastic differential equations (SDE), Young integral, exponential stability, random attractor.

\section{Introduction}

In this paper we study the long term asymptotic behavior of the following nonautonomous stochastic differential equation 
\begin{equation}\label{linfBm}
dx(t) = [A(t) x(t) + F(t,x(t))]dt + C(t) x(t) dZ(t),\ x(0) = x_0 \in \R^d,
\end{equation}
where $Z(t)$ is a stationary stochastic process with almost sure all trajectories $\omega(t) = Z(t,\omega)$ to be H\"older continuous of index $\nu > \frac{1}{2}$. System \eqref{linfBm} can be solved by the pathwise approach with the help of Young integral \cite{young}. We will derive sufficient conditions on coefficient functions $A, F, C$, for which the zero solution is asymptotically or exponentially stable. 

Stochastic stability is systematically treated in \cite{khasminskii} and \cite{mao}. For example, the stability problem for system under a standard Brownian noise, i.~e.\ the case of which $Z(t)$ is replaced by the stochastic Brownian motion $B(t)$, can be studied using the Ito's formula
\[
d \|x(t)\|^2 
= \Big(2 \langle x(t), A(t) x(t) \rangle +  2 \langle x(t), F(t,x(t)) \rangle + \|C(t) x(t)\|^2 \Big) dt + 2 \langle x(t), C(t) x( t)\rangle dB(t),
\]
which follows that 
\begin{eqnarray}\label{Itoest}
d E \|x(t)\|^2 = E \Big(2 \langle x(t), A(t) x(t) \rangle +  2 \langle x(t), F(t,x(t)) \rangle + \|C(t) x(t)\|^2 \Big) dt, 
\end{eqnarray}
where $E$ denotes the expectation function.
Therefore under conditions on negative definiteness of $A(t)$ and global Lipschitz continuity of $F$ w.r.t. $x$ with a small Lipschitz constant, given $\|C(t)\|$ small enough, the quantity $ E \|x(t)\|^2 $ is exponentially decaying to zero, which implies that $\|x(t)\|$ converges exponentially and almost surely to zero due to Borel-Catelli lemma (see \cite[p\ 255]{Shiryaev}). 

The situation is however different here with equation \eqref{linfBm}, since in general $Z$ is neither a Markov process nor a semimartingale (e.g. fractional Brownian motion $B^H$ \cite{nourdin}), hence the expectation $E \langle x(t), C(t) x(t) \rangle dZ(t)$ does not vanish. Therefore a new approach to study stochastic stability is necessary.  Recently, the global dynamics is studied in \cite{ducGANSch18} for which the noise is assumed to be fractional Brownian motion with small noise in the sense that the H\"older seminorm of its realization is integrable and can be controlled to be small. On the other hand, the local stability is studied in \cite{garrido-atienzaetal} and in \cite{GABSch18} for which the diffusion coefficient $C(t) x(t)$ is replaced by $G(x(t))$ which is flat, i.e. $G(0) = DG(0) = 0$. It is also important to note that all above mentioned references apply fractional calculus (see also \cite{mishura}, \cite{nualart}, \cite{zahle}, \cite{zahle2}) and the semigroup approach to deal with the stability problem. 

Looking back at the classical theory of ordinary differential equations we know that there are two fundamental methods to deal with stability problem of solution of an ODE --- the methods of Lyapunov, which proved to be powerful tools of qualitative theory of ODE and the stability theory in particular.  In case of the first method one linearizes the system near an equilibrium and studies the growth rate (Lyapunov exponents) of the solutions and the spectrum of derived linear system and then deduces the asymptotic properties of the original nonlinear systems near the fixed point.  In case of the second Lyapunov method one studies the action of the ODE on a specific function (called Lyapunov function) and then deduces asymptotic properties of the system without the need of solving the ODE explicitly (hence this method is called the method of Lyapunov functions).

In this paper we reinvestigate the stability problem using a different method compared to the references mentioned above, namely we use the approach of the second Lyapunov method: we construct a Lyapunov-type function, which is the norm function, and combine the discretization scheme developed in \cite{congduchong17}, \cite{congduchong18} and \cite{ducGANSch18} but for polar coordinates, using $p {\rm -var}$ norm estimates. The main difficulty lies in how to use path-wise estimates to deal with the driving noise, which is expected to be technical. We prove in Theorem \ref{stabfSDElin} that for $A$ negative definite and $F$ with small Lipschitz coefficient, one can choose $C$ small enough in terms of average $q-$var norm such that the system is pathwise exponentially stable. As such, the result gives a significantly better stability criterion than those in \cite{ducGANSch18} and \cite{GAKLBSch2010}, and moreover matches the stability criteria for ordinary differential equations when the noise is diminished (see details in Remark \ref{comparison}). To our knowledge, our method is also the first attempt to study the stability for Young differential equations using Lyapunov type functions. 

The result is then applied to study the asymptotic behavior of the stochastic system
\begin{equation}\label{fSDE0}
dx(t) = [Ax(t) + f(x(t))]dt + Cx(t)dB^H(t),t\in \R,\ x(0)=x_0 \in \R^d, 
\end{equation}
where we assume for simplicity that $A,C \in \R^{d\times d}$, $f: \R^d \to \R^d$ such that $f(0) \ne 0$, and $B^H$ is an one-dimensional fractional Brownian motion with Hurst exponent $H \in (1/2,1)$ \cite{mandelbrot}, i.e. it is a family of centered Gaussian processes $B^H = \{B^H(t)\}$, $t\in \R$ with continuous sample paths and the covariance function
\[
R_H(s,t) = \tfrac{1}{2}(t^{2H} + s^{2H} - |t-s|^{2H}), \forall t,s \in \R.
\]
Since no deterministic equilibrium such as the zero solution is found, system \eqref{fSDE0} is expected to possess a random attractor, which is a generalization of the classical attractor concept (see e.g. \cite{crauel-flandoli} or \cite{crauelkloeden15} for a survey on random attractor theory). In the stochastic setting with fractional Brownian motions, in \cite{GAKLBSch2010}  the existence of the random attractor is investigated assuming that the diffusion coefficient is bounded. Here in this paper, we will prove in Theorem \ref{attractor} that there exists a global random attractor for system \eqref{fSDE0}, and moreover the random attractor consists of only one random point.

\section{Preliminaries}
\subsection{Young integral}
Let $C([a,b],\R^d)$ denote the space of all continuous paths $x:\;[a,b] \to \R^d$ equipped with sup norm $\|\cdot\|_{\infty,[a,b]}$ given by $\|x\|_{\infty,[a,b]}=\sup_{t\in [a,b]} \|x(t)\|$, where $\|\cdot\|$ is the Euclidean norm in $\R^d$. For $p\geq 1$ and $[a,b] \subset \R$, $\cC^{p{\rm-var}}([a,b],\R^d)\subset C([a,b],\R^d)$ denotes the space of all continuous paths $x:[a,b] \to \R^d$ which are of finite $p-$variation 
\begin{eqnarray}
\ltn x\rtn_{p\rm{-var},[a,b]} :=\left(\sup_{\Pi(a,b)}\sum_{i=1}^n \|x(t_{i+1})-x(t_i)\|^p\right)^{1/p} < \infty,
\end{eqnarray}
where the supremum is taken over the whole class of finite partitions of $[a,b]$. $\cC^{p{\rm-var}}([a,b],\R^d)$ equipped with the $p-$var norm
\begin{eqnarray*}
	\|x\|_{p\text{-var},[a,b]}&:=& \|x(a)\|+\ltn x\rtn_{p\rm{-var},[a,b]},
\end{eqnarray*}
is a nonseparable Banach space \cite[Theorem 5.25, p.\ 92]{friz}. Also for each $0<\alpha<1$, we denote by $C^{\alpha\rm{-Hol}}([a,b],\R^d)$ the space of H\"older continuous functions with exponent $\alpha$ on $[a,b]$ equipped with the norm
\[
\|x\|_{\alpha\rm{-Hol},[a,b]}: = \|x(a)\| + \sup_{a\leq s<t\leq b}\frac{\|x(t)-x(s)\|}{(t-s)^\alpha}.
\]

Given a simplex $\Delta[a,b]:=\{(s,t)|\  a\leq s\leq t\leq b\}$, a continuous map $\overline{\omega}: \Delta[a,b]\longrightarrow \R^+$ is called a {\it control} (see e.g. \cite{friz}) if it is zero on the diagonal and superadditive, i.e\\
(i), For all $t\in [a,b]$, $\overline{\omega}_{t,t}=0$,\\
(ii), For all $s\leq t\leq u$ in $[a,b]$, $\overline{\omega}_{s,t}+\overline{\omega}_{t,u}\leq \overline{\omega}_{s,u}$.

Now, consider $x\in \cC^{q{\rm-var}}([a,b],\R^{d\times m})$ and $\omega\in \cC^{p{\rm -var}}([a,b],\R^m)$ with  $\frac{1}{p}+\frac{1}{q}  > 1$, the Young integral $\int_a^bx(t)d\omega(t)$ can be defined as 
\[
\int_a^bx(s)d\omega(s):= \lim \limits_{|\Pi| \to 0} \sum_{[u,v] \in \Pi} x(u)(\omega(v)-\omega(u)) ,
\]
where the limit is taken on all the finite partitions $\Pi=\{ a=t_0<t_1<\cdots < t_n=b \}$ of $[a,b]$ with $|\Pi| := \displaystyle\max_{[u,v]\in \Pi} |v-u|$ (see \cite[p.\ 264--265]{young}). This integral satisfies additive property by the construction, and the so-called Young-Loeve estimate \cite[Theorem 6.8, p.\ 116]{friz}
\begin{eqnarray}\label{YL0}
\Big\|\int_s^t x(u)d\omega(u)-x(s)[\omega(t)-\omega(s)]\Big\| \leq K \ltn x\rtn_{q\rm{-var},[s,t]} \ltn\omega\rtn_{p\rm{-var},[s,t]}, \;\forall \; [s,t]\subset [a,b]
\end{eqnarray}
where 
\begin{equation}\label{constK}
K:=(1-2^{1-\theta})^{-1},\qquad \theta := \frac{1}{p} + \frac{1}{q} >1.
\end{equation}
Throughout this paper, we would assume for simplicity that $m=1$. Notice that all the results are still correct for any $m \in \N$, with a small modification. 
\subsection{Nonlinear Young differential equations}

For any fixed $1<p <2$, $T>0$ and a continuous path $\omega$ that belongs to $ \cC^{p{\rm-var}}([0,T],\R)$, consider the deterministic differential equation in the Young sense
\begin{equation}\label{lin1}
dx(t) = [A(t) x(t) + F(t,x(t))]dt  + C(t)x(t) d\omega(t),\;\; x(0)=x_0,
\end{equation}
where $0\leq t\leq T$, $x_0\in\R^d$, $A\in C([0,T],\R^{d\times d})$ and $C \in \cC^{q{\rm-var}}([0,T],\R^{d\times d})$ with $q$ satisfying $q\geq p$ and $\frac{1}{p} + \frac{1}{q} >1$. Additionally, $F$ is globally Lipschitz continuous w.r.t. $x$, i.e there exists $L>0$ such that for all $t\in[0,T]$, for all  $x,y\in \R^d$: $\|F(t,x)-F(t,y)\|\leq L\|x-y\|$. Then the system \eqref{lin1} possesses a unique solution  in both the forward and backward sense, as studied in \cite{congduchong17,congduchong18}. In fact under these conditions the system  can be transformed to a classical ordinary differential equation which satisfies the existence and uniqueness theorem. 
\begin{theorem}\label{existence}
	There exists a unique solution to the system \eqref{lin1} in the space $\cC^{q{\rm-var}}([0,T],\R^{d})$.
\end{theorem}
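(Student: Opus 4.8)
The plan is to reduce the Young differential equation \eqref{lin1} to a classical ODE to which the standard Picard--Lindel\"of existence and uniqueness theorem applies. This is exactly the transformation hinted at in the paragraph preceding the theorem, where it is noted that ``the system can be transformed to a classical ordinary differential equation.'' The idea is to absorb the rough driving term $C(t)x(t)\,d\omega(t)$ into a change of variable, so that the genuinely rough part of the dynamics disappears and we are left with a drift that is continuous (indeed Lipschitz) in the new variable.

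Concretely, since we have assumed $m=1$, the driving noise $\omega$ is scalar. I would introduce the (matrix-valued) linear flow associated with the pure Young term, i.e. solve $d\Phi(t) = C(t)\Phi(t)\,d\omega(t)$ with $\Phi(0)=\mathrm{Id}$. Because the equation is \emph{linear} in the state and $\omega$ is scalar of finite $p$-variation with $C \in \cC^{q{\rm-var}}$ and $\tfrac1p+\tfrac1q>1$, this linear Young equation has a unique solution obtained as a convergent Young--Volterra/Peano-type series, and $\Phi(t)$ is an invertible matrix for every $t$ (one can exhibit the inverse explicitly via the backward equation). First I would establish that $\Phi$ and $\Phi^{-1}$ both lie in $\cC^{q{\rm-var}}([0,T],\R^{d\times d})$; the relevant estimates come from the Young--Loeve inequality \eqref{YL0} together with the constant $K$ in \eqref{constK}. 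Then I substitute $x(t) = \Phi(t)\,y(t)$ into \eqref{lin1}; using the Young product (Leibniz) rule for paths of complementary variation, the $d\omega$ terms cancel exactly, leaving the transformed equation
\begin{equation*}
\dot y(t) = \Phi(t)^{-1}\big[A(t)\Phi(t)y(t) + F(t,\Phi(t)y(t))\big], \qquad y(0)=x_0,
\end{equation*}
which is a genuine ODE (driven by $dt$ only), with right-hand side $G(t,y) := \Phi(t)^{-1}[A(t)\Phi(t)y + F(t,\Phi(t)y)]$.

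Next I would verify that $G$ satisfies the hypotheses of the classical existence and uniqueness theorem on $[0,T]$. The map $y\mapsto G(t,y)$ is globally Lipschitz with a time-dependent but bounded Lipschitz constant: the linear part contributes $\|\Phi(t)^{-1}A(t)\Phi(t)\|$, while the $F$-part contributes $L\,\|\Phi(t)^{-1}\|\,\|\Phi(t)\|$, using the global Lipschitz assumption $\|F(t,x)-F(t,y)\|\le L\|x-y\|$. Since $A$ is continuous and $\Phi,\Phi^{-1}$ are continuous (being of finite $q$-variation) on the compact interval $[0,T]$, all these operator norms are bounded, so $G$ is continuous in $t$ and uniformly Lipschitz in $y$. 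The Picard--Lindel\"of theorem then gives a unique global solution $y\in C^1([0,T],\R^d)$, and transforming back via $x=\Phi y$ yields a unique solution $x$ of \eqref{lin1}. Because $\Phi\in\cC^{q{\rm-var}}$ and $y$ is $C^1$ (hence of finite $q$-variation), the product $x=\Phi y$ belongs to $\cC^{q{\rm-var}}([0,T],\R^d)$, which is the claimed regularity. Uniqueness transfers back since $\Phi$ is invertible.

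The main obstacle I anticipate is making the change of variables rigorous at the level of Young integrals: one must justify the Leibniz/product rule $d(\Phi y) = (d\Phi)\,y + \Phi\,dy$ for the pair $(\Phi,y)$ and check that the cross terms combine so that the $C\,x\,d\omega$ contribution is exactly reproduced by $(d\Phi)\,y$, leaving no residual rough term. This requires that $\Phi$ have finite $q$-variation and $y$ be regular enough that their Young integrals against $\omega$ and against each other are well defined, which is why controlling $\ltn\Phi\rtn_{q{\rm-var}}$ and $\ltn\Phi^{-1}\rtn_{q{\rm-var}}$ via \eqref{YL0} is the key technical step; everything after the reduction is the routine classical ODE theory. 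An alternative, and perhaps cleaner, route that avoids an explicit transformation is to run a Picard iteration directly in $\cC^{q{\rm-var}}([0,T],\R^d)$, using \eqref{YL0} to bound the Young integral $\int_0^t C(s)x(s)\,d\omega(s)$ and showing the solution map is a contraction on short intervals before concatenating; but since the statement explicitly appeals to the reduction to a classical ODE, I would present the transformation argument as the primary proof.
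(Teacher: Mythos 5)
Your proposal is correct and takes essentially the same approach as the paper: both reduce \eqref{lin1} to a classical ODE by conjugating with the invertible fundamental matrix of a linear Young equation (whose existence, uniqueness and regularity are quoted from the linear theory) and then apply the classical Picard--Lindel\"of theorem, transferring existence and uniqueness back through the transformation. The only immaterial difference is that your $\Phi$ solves the noise-only equation $d\Phi = C(t)\Phi\,d\omega(t)$, so the conjugated drift $\Phi^{-1}A\Phi y$ survives in the transformed ODE, whereas the paper's $\Phi$ is the fundamental matrix of the full linear part \eqref{lin1a} (citing \cite{congduchong18}), leaving only the term $\Phi^{-1}F(t,\Phi u)\,dt$.
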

\begin{proof}
	Indeed, due to  \cite{congduchong18}, there exists a unique solution to the equation 
	\begin{equation}\label{lin1a}
	dz(t) = A(t) z(t)dt  + C(t)z(t) d\omega(t)
	\end{equation}
	in the space $ \cC^{q{\rm-var}}([0,T],\R^{d})$. Denote by $\Phi(t,\omega)$ the fundamental matrix of solution of \eqref{lin1a} with $\Phi(0,\omega) = Id$ - the identity matrix. Put $u(t) = \Phi^{-1}(t,\omega)x(t)$, then by the integration by part formula, $u$ satisfies the equation
	\begin{eqnarray}\label{u}
	du(t)&=& \Phi^{-1}(t,\omega) dx(t) + d\Phi^{-1}(t,\omega)x(t)\notag\\
	&=&\Phi^{-1}(t,\omega) \Big[\Big(A(t)x(t) +F(t,x(t))\Big)dt  + C(t)x(t)d\omega(t)\Big] \notag\\
	&&-\Phi^{-1}(t,\omega)\Big(A(t)\Phi(t) dt + C(t) \Phi(t,\omega)d\omega(t)\Big)\Phi^{-1}(t,\omega)x(t)\notag\\
	&=& \Phi^{-1}(t,\omega)F(t,\Phi(t,\omega)u(t))dt=:G(t,u(t))dt.
	\end{eqnarray}
	Since, $\Phi(\cdot,\omega)$ and $\Phi^{-1}(\cdot,\omega)$ are continuous on $[0,T]$, it is easy to check that $G(t,u)$ satisfy the global Lipschitz condition which assures the existence and uniqueness of a global solution to \eqref{u} on $[0,T]$, and moreover $u\in C^{1}([0,T],\R^d)$. The one-one correspondence between solutions of \eqref{lin1} and solutions of \eqref{u} then prove the existence and uniqueness of solution of \eqref{lin1}. The same conclusion holds for the backward equation of \eqref{lin1}.
\end{proof}
\section{Exponential stability of nonlinear Young differential equations}

In this section we are going to study the exponential stability of \eqref{lin1} where $\omega\in \cC^{p{\rm-var}}([0,T],\R)$, $A\in C([0,T],\R^{d\times d})$ and $C \in \cC^{q{\rm-var}}([0,T],\R^{d\times d})$ for any $T>0$. First, we formulate the definition of stability for deterministic Young differential equations (for the classical stability notion see e.g. \cite[p.~17]{hale}, \cite[p.~152]{nemytskii}, or \cite{ducetal06}).
\begin{definition}\label{Defstability}
	(A) Stability: A solution $\mu(\cdot)$ of the deterministic Young differential equation \eqref{lin1} is called stable, if for any $\varepsilon >0$ there exists an $r =r(\varepsilon)>0$ such that for any solution $x(\cdot)$ of  \eqref{lin1} satisfying  $\|x(0)-\mu(0)\| < r$ the following inequality holds 
	\[
	\sup_{t\geq 0}\|x(t)-\mu(t)\| < \varepsilon.
	\]
	(B) Attractivity: $\mu(\cdot)$ is called attractive, if  there exists $r >0$ such that for any solution $x(\cdot)$ of  \eqref{lin1} satisfying  $\|x(0)-\mu(0)\| < r$ we have
	\[
	\lim \limits_{t \to \infty} \|x(t)-\mu(t)\| = 0.   
	\]	
	(C) Asymptotic stability: $\mu(\cdot)$ is called
	\begin{enum}
		\item asymptotically stable, if it is stable and attractive.
		\item exponentially  stable, if it is stable and there exists $r>0$ such that for any solution $x(\cdot)$ of  \eqref{lin1} satisfying  $\|x(0)-\mu(0)\| < r$ we have
		\[
		\varlimsup \limits_{t \to \infty} \frac{1}{t} \log\|x(t)-\mu(t)\|
		< 0. 
		\]
	\end{enum}
\end{definition}
Below we need several assumptions for $A, F,C$.\\

(${\textbf H}_1$) $A$ is negative definite in the sense  that there exists a  function $h: \R^+ \to \R^+$ such that
\begin{equation}\label{negdefh}
\langle x,A(t)x \rangle \leq -h(t) \|x\|^2,\quad\hbox{for all}\quad  x\in \R^d.
\end{equation} 

(${\textbf H}_2$) $F(t,0) \equiv 0$ for all $t\in \R^+$ and $F(t,x)$ is of globally Lipschitz continuous w.r.t. $x$, i.e. there exists a positive continuous function $f: \R^+ \to \R^+$ such that
\begin{equation}\label{lipF}
\|F(t,x)-F(t,y)\| \leq f(t) \|x-y\|, \quad \forall x, y \in \R^d.
\end{equation}

(${\textbf H}_3$) There exist constants 
\begin{eqnarray}
&&\hat{A}:=\varlimsup_{m\to\infty}\left(\frac{1}{m+1}\sum_{k=0}^m \Big(\|A\|_{\infty,\Delta_k} + \|f\|_{\infty,\Delta_k}\Big)^{4p} \right)^{\frac{1}{4p}}< \infty; \label{A-hat}\\	
&&\hat{C}:= \varlimsup \limits_{m \to \infty}\left(\frac{1}{m+1} \sum_{k=0}^{m}\|C\|_{q{\rm-var},  \Delta_k}^{2p+2}\right)^{\frac{1}{2p+2}}< \infty; \label{C-hat}\\
&&\Gamma(\omega,2p+2):=\varlimsup_{m\to\infty}\left(\frac{1}{m+1}\sum_{k=0}^m \ltn \omega\rtn^{2p+2}_{p{\rm- var},\Delta_k}\right)^{\frac{1}{2p+2}} < \infty, \label{omegagrowth}
\end{eqnarray}
where $\Delta_k:=[k,k+1]$.
\begin{remark}\label{h3test}
	(i), Since $\langle x,A(t)x\rangle  = \frac{1}{2} \langle x,A(t)x\rangle  + \frac{1}{2} \langle  x, A^T(t)x \rangle  =\langle x,B(t)x\rangle$, where $B(t) = \frac{1}{2}[A(t)+A^T(t)]$
	and since the smallest eigenvalue $h^*(t)$ of the symmetric matrix $-B(t)$ satisfies
	$$h^*(t) = \min\{ \langle x,-B(t)x\rangle \mid \ \|x\|=1\},$$
	it follows from (${\textbf H}_1$) that $h^*(t)\geq h(t)$ for all $t\in\R^+$, $h$ can also be replaced by $h^*$ in asssumption (${\textbf H}_1$). The reader is referred to \cite{demidovich}, \cite{wazewski} for stability theory of ordinary differential equations.    
	
	(ii) While assumptions (${\textbf H}_1$) and (${\textbf H}_2$) are usual, it is important to note that (${\textbf H}_3$) is satisfied in the simplest case of autonomous systems, i.e. $A(t) \equiv A, C(t) \equiv C$ and $f$ is bounded on $\R^+$. Then $\hat{A} \leq \|A\| + \|f\|_{\infty,\R^+}, \hat{C}= \|C\|$. For a nontrivial example, consider $A(t) = A(\Theta_t \eta), f(t) = f(\Theta_t \eta), C(t) = C(\Theta_t \eta)$ which depends on a dynamical system $\Theta_t$ on a space of elements $\eta \in C^{q{\rm-var}}$ such that $\Theta$ is invariant under some probability measure. Then $A(\cdot), C(\cdot)$ are functions of a stationary process. Conditions \eqref{A-hat} and \eqref{C-hat} are equivalent to 
	\begin{eqnarray}
	\hat{A}= \Big[ E (\|A(\eta)\|_{\infty,[0,1]}+ \|f(\eta)\|_{\infty,[0,1]})^{4p} \Big]^{\frac{1}{4p}}< \infty,\\ 
	\hat{C}= \left(E \|C(\eta)\|^{2p+2}_{q{\rm-var},[0,1]} \right)^{\frac{1}{2p+2}}< \infty.
	\end{eqnarray}
	Meanwhile, assumption \eqref{omegagrowth} is satisfied for almost sure all trajectories $\omega$ of the stationary process $Z(t)$ if
	\begin{equation}\label{gammaome}
	\Gamma(\omega,2p+2) = \Big(E (\ltn Z(\cdot) \rtn_{p {\rm-var},[0,1]}^{2p+2})\Big)^{\frac{1}{2p+2}} < \infty.
	\end{equation}
	
	(iii) It is easy to check (see \cite{congduchong17} and \cite{congduchong18}) that conditions (${\textbf H}_2$) and (${\textbf H}_3$) assure the existence and uniqueness of a global solution to \eqref{lin1} on $\R^+$.
\end{remark}
\begin{lemma}\label{lem70}
	Let $1\leq p\leq q$ be arbitrary and satisfy $\frac{1}{p}+\frac{1}{q}>1$.  Assume that $\omega \in \cC^{p{\rm-var}}([0,T],\R)$ and $y\in \cC^{q{\rm-var}}([0,T],\R^d)$ satisfy
	\begin{equation}\label{condition2}
	\ltn y\rtn_{q{\rm-var},[s,t]}\leq b(1 +\ltn y\rtn_{q{\rm-var},[s,t]}) (t-s+\ltn \omega\rtn_{p{\rm-var},[s,t]}), 
	\end{equation}
	for all $[s,t] \subset [0,T]$, where $b\geq 0$ is a constant. Then there exists a constant $C(b)$ independent of $T$ such that the following inequality holds for every $s<t$ in $[0,T]$ 
	\begin{eqnarray}\label{yest}
	\ltn y\rtn_{q{\rm-var},[s,t]} 
	&\leq& C(b) \max \Big\{(t-s)^p+\ltn\omega\rtn^p_{p{\rm-var},[s,t]}, (t-s)+ \ltn\omega\rtn_{p{\rm-var},[s,t]} \Big\}.
	\end{eqnarray}
\end{lemma}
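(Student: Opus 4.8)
The plan is to combine a local \emph{absorption} estimate with a greedy decomposition of $[s,t]$ governed by a superadditive weight, so that the constant never sees the length of the interval. Write $\phi(s,t):=(t-s)+\ltn \omega\rtn_{p{\rm-var},[s,t]}$ for the factor on the right of \eqref{condition2}, and introduce the auxiliary function
\[
\varpi(s,t):=(t-s)+\ltn \omega\rtn^p_{p{\rm-var},[s,t]}.
\]
Since $t\mapsto t-s$ is additive and $\ltn \omega\rtn^p_{p{\rm-var}}$ is superadditive, $\varpi$ is a genuine control: continuous and vanishing on the diagonal. (We may assume $b>0$, the case $b=0$ forcing $\ltn y\rtn_{q{\rm-var}}\equiv 0$.)

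First I would establish a local bound by absorption. If $b\,\phi(s,t)\le\tfrac12$, then the self-referential inequality \eqref{condition2} can be solved for $\ltn y\rtn_{q{\rm-var},[s,t]}$: moving the term $b\,\phi\,\ltn y\rtn_{q{\rm-var},[s,t]}$ to the left and using that this seminorm is finite (as $y\in\cC^{q{\rm-var}}$) gives $\ltn y\rtn_{q{\rm-var},[s,t]}\le 2b\,\phi(s,t)$. Now $\varpi(s,t)\le\alpha\le1$ forces $t-s\le\alpha$ and $\ltn \omega\rtn_{p{\rm-var},[s,t]}\le\alpha^{1/p}$, hence $\phi(s,t)\le 2\alpha^{1/p}$; so the choice $\alpha:=\min\{1,(4b)^{-p}\}$ guarantees $b\,\phi\le\tfrac12$, and on every subinterval with $\varpi\le\alpha$ we get $\ltn y\rtn_{q{\rm-var}}\le 2b\,\phi$.

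Next I would decompose $[s,t]$ greedily into $s=\tau_0<\tau_1<\cdots<\tau_N=t$ with $\varpi(\tau_j,\tau_{j+1})=\alpha$ for $j<N-1$; superadditivity of $\varpi$ then yields $N\le \varpi(s,t)/\alpha+1$. To reassemble I would use the triangle inequality for the $q$-variation seminorm, $\ltn y\rtn_{q{\rm-var},[s,t]}\le\sum_{j}\ltn y\rtn_{q{\rm-var},[\tau_j,\tau_{j+1}]}$, which follows from the elementary observation that $(A+B)^q-A^q-B^q$ is nondecreasing in $A,B$ for $q\ge1$. Inserting the local bound and then applying Hölder's (power-mean) inequality together with superadditivity of $\ltn \omega\rtn^p_{p{\rm-var}}$ gives
\[
\ltn y\rtn_{q{\rm-var},[s,t]}\le 2b(t-s)+2b\sum_{j=0}^{N-1}\ltn \omega\rtn_{p{\rm-var},[\tau_j,\tau_{j+1}]}\le 2b(t-s)+2b\,N^{1-1/p}\,\ltn \omega\rtn_{p{\rm-var},[s,t]}.
\]
Finally I would collapse this into the stated maximum by inserting $N\le\varpi/\alpha+1$: when $\varpi(s,t)\le\alpha$ one has $N=1$ and the right side is $2b\,\phi(s,t)$, the second argument of the maximum; when $\varpi(s,t)>\alpha$ one has $N\le 2\varpi/\alpha$, so that $N^{1-1/p}\ltn \omega\rtn_{p{\rm-var}}\le(2/\alpha)^{1-1/p}\varpi^{1-1/p}\ltn \omega\rtn_{p{\rm-var}}$, and after sub-additivity of $x\mapsto x^{1-1/p}$ and a case-split on whether $(t-s)$ and $\ltn \omega\rtn_{p{\rm-var}}$ exceed $1$, the surviving monomials (including the mixed term $(t-s)^{1-1/p}\ltn \omega\rtn_{p{\rm-var}}$) are each dominated by $\max\{(t-s)^p+\ltn \omega\rtn^p_{p{\rm-var}},(t-s)+\ltn \omega\rtn_{p{\rm-var}}\}$. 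Collecting constants yields \eqref{yest} with $C(b)=2b+4b(2/\alpha)^{1-1/p}$, which is visibly independent of $T$.

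The hard part will be the reassembly in the last two steps rather than the absorption. Because the $q$-variation seminorm is only superadditive in its $q$-th power, one must first justify the concatenation (triangle) inequality, and then simultaneously control the number $N$ of greedy pieces and the mixed power terms so that they land in exactly the two regimes encoded by the maximum -- all with a constant that does not grow with $t-s$. The dichotomy $\varpi\lessgtr\alpha$ (short intervals giving the linear bound $(t-s)+\ltn \omega\rtn_{p{\rm-var}}$, long intervals accumulating the $p$-th powers) is precisely what produces the maximum in \eqref{yest}.
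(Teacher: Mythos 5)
Your proposal is correct and rests on the same two pillars as the paper's proof: a local absorption estimate on intervals where a suitable control is small, followed by a globalization step that patches the local bounds together. The difference lies in how the globalization is done. The paper packages $2^{2p-1}b^p\big[(t-s)^p+\ltn\omega\rtn^p_{p{\rm-var},[s,t]}\big]$ into a single control $\overline{\omega}$, derives $|y(t)-y(s)|\le\overline{\omega}(s,t)^{1/p}$ whenever $\overline{\omega}(s,t)\le 1$, and then invokes \cite[Proposition 5.10]{friz} as a black box to conclude $\ltn y\rtn_{q{\rm-var},[s,t]}\le 2\max\{\overline{\omega}(s,t)^{1/p},\overline{\omega}(s,t)\}$, which gives \eqref{yest} with $C(b)=2\max\{(4b)^p,4b\}$. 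You instead re-prove that proposition by hand: greedy decomposition of $[s,t]$ into pieces with $\varpi=\alpha$, the concatenation inequality for the $q$-variation seminorm, H\"older's inequality, and superadditivity. Your route buys self-containedness at the price of bookkeeping; the paper's route buys brevity, since the cited proposition is in essence exactly the greedy argument you carry out. Two repairs to your write-up are needed, though neither is a structural gap. First, the concatenation inequality $\ltn y\rtn_{q{\rm-var},[s,u]}\le\ltn y\rtn_{q{\rm-var},[s,t]}+\ltn y\rtn_{q{\rm-var},[t,u]}$ is true, but your justification via the monotonicity of $(A+B)^q-A^q-B^q$ does not prove it; the correct argument is Minkowski's inequality in $\ell^q$ applied to the increment vector of a partition straddling $t$ (compare it componentwise with the sum of two padded vectors supported on $[s,t]$ and $[t,u]$). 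Second, in the regime where both $t-s$ and $\ltn\omega\rtn_{p{\rm-var},[s,t]}$ exceed $1$, the mixed term $(t-s)^{1-1/p}\ltn\omega\rtn_{p{\rm-var},[s,t]}$ is not handled by the case split alone; you need Young's inequality, or equivalently the observation that $2-\tfrac{1}{p}\le p$ (i.e.\ $(p-1)^2\ge 0$), to absorb it into the maximum in \eqref{yest}.
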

\begin{proof}
	Set $\overline{\omega}(s,t) = 2^{2p-1}b^p[(t-s)^p+\ltn \omega\rtn^p_{p\rm{-var},[s,t]}]$, then $\overline{\omega}(s,t)$ is a control on $\Delta[0,T]$ (see \cite{friz}) and due to the inequality $(a+b)^r\leq (a^r+b^r)\max\{1,2^{r-1}\},\;\forall a>0,b>0,r>0$ we have
	$$\ltn y\rtn_{q{\rm-var},[s,t]}\leq \frac{1}{2}(1+\ltn y\rtn_{q{\rm-var},[s,t]})\overline{\omega}(s,t)^{1/p}.$$
	This implies that 
	$$|y(t)-y(s)|\leq \ltn y\rtn_{q{\rm-var},[s,t]}\leq \overline{\omega}(s,t)^{1/p}$$
	for all $s,t\in [0,T]$ such that $\overline{\omega}(s,t)\leq 1$.
	Due to Proposition 5.10 of \cite{friz}, we have
	\begin{eqnarray*}
		\ltn y\rtn_{q{{\rm -var}},[s,t]}&\leq& 2 \max\{\overline{\omega}(s,t)^{1/p}, \overline{\omega}(s,t)\}\\
		&\leq& C(b) \max \Big\{(t-s)^p+\ltn\omega\rtn^p_{p{\rm-var},[s,t]}, (t-s)+ \ltn\omega\rtn_{p{\rm-var},[s,t]} \Big\},
	\end{eqnarray*}
	in which $C(b)=2\max\{(4b)^p,4b\}$.\\
\end{proof}
Our first main result on stability of system \eqref{lin1} can be formulated as follows.
\begin{theorem}\label{stabfSDElin}
	Suppose that the conditions (${\textbf H}_1$) -- (${\textbf H}_3$) are satisfied,  and further that 
	\begin{equation}\label{negdef}
	\liminf \limits_{t \to \infty} \frac{1}{t} \int_0^t [h(s)-f(s)]ds \geq h_0 >0.
	\end{equation} 
	Then under the condition 
	\begin{eqnarray}\label{criterion}
	h_0&>&K\Big(1 +4\hat{G}\Big)\hat{C}\Big[\Gamma(\omega,2) +  \Gamma(\omega,4)^{2}+\Gamma(\omega,2p+2)^{p+1}\Big] 
	\end{eqnarray}
	where $K$ is given by \eqref{constK} and 
	\[
	\hat{G}:=\max \Big \{8\hat{A}, 16K\hat{C}, 8^p\hat{A}^p, 16^pK^p \hat{C}^p \Big \},
	\]
	the zero solution of system \eqref{lin1} is exponentially stable.
\end{theorem}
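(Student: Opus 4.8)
The plan is to pass to polar coordinates and reduce the statement to an averaged estimate for a single scalar Young integral. Since $F(t,0)\equiv 0$, the function $x\equiv 0$ solves \eqref{lin1}; hence by the uniqueness in Theorem \ref{existence} any solution with $x_0\neq 0$ satisfies $x(t)\neq 0$ for all $t$, and it suffices to treat this case. Writing $\rho(t)=\|x(t)\|$ and $\phi(t)=x(t)/\rho(t)$ (so $\|\phi\|\equiv 1$) and applying the Young chain rule (legitimate since $\tfrac1p+\tfrac1q>1$), I would derive
\[
d\log\rho(t)=a(t)\,dt+c(t)\,d\omega(t),\qquad a(t):=\langle\phi,A(t)\phi\rangle+\tfrac{1}{\rho}\langle\phi,F(t,\rho\phi)\rangle,\quad c(t):=\langle\phi,C(t)\phi\rangle,
\]
together with the tangential equation $d\phi=[A\phi+\rho^{-1}F-a\phi]\,dt+[C\phi-c\phi]\,d\omega$. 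By (${\textbf H}_1$) and (${\textbf H}_2$) one has $a(t)\le-(h(t)-f(t))$, so $\tfrac1t\int_0^t a\,ds\le-\tfrac1t\int_0^t(h-f)\,ds$, whose $\varlimsup$ is $\le-h_0$ by \eqref{negdef}. Thus everything reduces to bounding $\varlimsup_{t\to\infty}\tfrac1t\big|\int_0^t c\,d\omega\big|$ by the right-hand side of \eqref{criterion}.

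Next I would discretize over the unit intervals $\Delta_k=[k,k+1]$ and first obtain a per-interval a priori bound on $\ltn\phi\rtn_{q{\rm-var},\Delta_k}$. Applying the Young--Loeve estimate \eqref{YL0} to the noise term of the $\phi$-equation on $[s,t]\subset\Delta_k$, and using $\|\phi\|\equiv1$ together with $\ltn C\phi-c\phi\rtn_{q{\rm-var}}\lesssim\|C\|_{q{\rm-var},\Delta_k}(1+\ltn\phi\rtn_{q{\rm-var}})$, I would arrive at an inequality of exactly the form \eqref{condition2},
\[
\ltn\phi\rtn_{q{\rm-var},[s,t]}\le b\,(1+\ltn\phi\rtn_{q{\rm-var},[s,t]})\big((t-s)+\ltn\omega\rtn_{p{\rm-var},[s,t]}\big),
\]
with $b$ controlled by $\|A\|_{\infty,\Delta_k}$, $\|f\|_{\infty,\Delta_k}$ and $K\|C\|_{q{\rm-var},\Delta_k}$. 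Lemma \ref{lem70} then yields, since $|\Delta_k|=1$, a bound $\ltn\phi\rtn_{q{\rm-var},\Delta_k}\le C(b)(1+\ltn\omega\rtn^p_{p{\rm-var},\Delta_k})$; the shape $C(b)=2\max\{(4b)^p,4b\}$ is precisely what produces the factor $\hat G=\max\{8\hat A,16K\hat C,8^p\hat A^p,16^pK^p\hat C^p\}$ after averaging, and consequently $\ltn c\rtn_{q{\rm-var},\Delta_k}\lesssim\|C\|_{q{\rm-var},\Delta_k}(1+\ltn\omega\rtn^p_{p{\rm-var},\Delta_k})$.

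I would then split $\int_0^t c\,d\omega=\sum_{k=0}^{m-1}\int_{\Delta_k}c\,d\omega+\int_m^t c\,d\omega$ with $m=\lfloor t\rfloor$, and apply \eqref{YL0} on each $\Delta_k$ to write $\int_{\Delta_k}c\,d\omega=c(k)(\omega(k+1)-\omega(k))+R_k$ with $|R_k|\le K\ltn c\rtn_{q{\rm-var},\Delta_k}\ltn\omega\rtn_{p{\rm-var},\Delta_k}$. Using $|c(k)|\le\|C\|_{q{\rm-var},\Delta_k}$, $|\omega(k+1)-\omega(k)|\le\ltn\omega\rtn_{p{\rm-var},\Delta_k}$ and the previous bound, the averaged sum $\tfrac1{m+1}\sum_k(\cdots)$ is dominated by three families, $\|C\|_{\Delta_k}\ltn\omega\rtn_{\Delta_k}$, $\|C\|_{\Delta_k}\ltn\omega\rtn^2_{\Delta_k}$ and $\|C\|_{\Delta_k}\ltn\omega\rtn^{p+1}_{\Delta_k}$. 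Applying Cauchy--Schwarz in $k$ to each, together with the power-mean bound $(\tfrac1{m+1}\sum\|C\|^2_{\Delta_k})^{1/2}\le\hat C$ (valid since $2\le 2p+2$), these converge respectively to $\hat C\,\Gamma(\omega,2)$, $\hat C\,\Gamma(\omega,4)^2$ and $\hat C\,\Gamma(\omega,2p+2)^{p+1}$, the boundary term $\int_m^t c\,d\omega$ being $O(1)$ and negligible after dividing by $t$. This gives exactly $\varlimsup_{t\to\infty}\tfrac1t\big|\int_0^t c\,d\omega\big|\le K(1+4\hat G)\hat C[\Gamma(\omega,2)+\Gamma(\omega,4)^2+\Gamma(\omega,2p+2)^{p+1}]$.

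Combining the two contributions yields a uniform (solution-independent) upper function $\Psi(t)$ with $\log\rho(t)-\log\rho(0)\le\Psi(t)$ and $\varlimsup_{t\to\infty}\tfrac1t\Psi(t)\le-h_0+K(1+4\hat G)\hat C[\Gamma(\omega,2)+\Gamma(\omega,4)^2+\Gamma(\omega,2p+2)^{p+1}]$, which is strictly negative by \eqref{criterion}; this is the required negative Lyapunov exponent in Definition \ref{Defstability}(C)(ii). Since $\Psi$ is bounded on compacts and tends to $-\infty$, we have $\sup_{t\ge0}\Psi(t)=:M(\omega)<\infty$ independently of $x_0$, so $\|x(t)\|\le\|x_0\|e^{M}$ for all $t$, giving the stability half with $r(\eps)=\eps e^{-M}$. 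The main obstacle is the second paragraph: recasting the $\phi$-equation into the self-referential form \eqref{condition2} and tracking the exact powers of $\ltn\omega\rtn_{p{\rm-var},\Delta_k}$ through Lemma \ref{lem70}, so that after averaging they line up with the prescribed exponents $2,4,2p+2$ in \eqref{criterion}; justifying the polar change of variables and the Young chain rule it rests upon is routine for $\nu>\tfrac12$.
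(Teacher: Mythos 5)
Your proposal is correct and follows essentially the same route as the paper's own proof: the norm in polar coordinates as a Lyapunov-type function giving $d\log\|x\| = \langle y,Ay\rangle dt + \langle y, \tfrac{F}{\|x\|}\rangle dt + \langle y,Cy\rangle d\omega$, Young--Loeve estimates on the unit intervals $\Delta_k$, the self-referential inequality \eqref{condition2} resolved by Lemma \ref{lem70} to control $\ltn y\rtn_{q{\rm-var},\Delta_k}$, and Cauchy--Schwarz/H\"older averaging under (${\textbf H}_3$) to arrive exactly at the bound $K(1+4\hat G)\hat C\,[\Gamma(\omega,2)+\Gamma(\omega,4)^2+\Gamma(\omega,2p+2)^{p+1}]$ and criterion \eqref{criterion}. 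Your only genuine addition is the explicit uniform-in-$x_0$ bound $\sup_{t\ge 0}\Psi(t)=M(\omega)<\infty$ establishing the stability half of Definition \ref{Defstability}, which the paper leaves implicit.
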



\begin{proof}
	Our proof is divided into three steps. In {\bf Step 1}, we use polar coordinates to derive the growth rate of the solution in \eqref{trans3}. The estimate for $q-$var seminorm of the angular $y$ is then derived in \eqref{yqvarest} in {\bf Step 2}, applying Lemma \ref{lem70}. As such, the solution growth rate can finally be estimated in \eqref{rate}, in which each component is estimated in {\bf Step 3} using hypothesis (${\textbf H}_3$). The theorem is then proved by choosing $\epsilon$ such that \eqref{criterion} is satisfied.  \\
	
	{\bf Step 1:} Put $r(t) := \|x(t)\|$. Due to the fact that the system \eqref{lin1} possesses a unique solution  in both the forward and backward sense and that $x(t)\equiv 0$ is the unique solution through zero, the solution starting from the initial condition $x(0)\ne 0\in\R^d$ satisfies $x(t)\ne 0$ for all $t\in \R^+$. We then can define $y(t) := \frac{x(t)}{\|x(t)\|}$. Using integration by part technique (see, e.g., Z{\"a}hle~\cite{zahle,zahle2}), it is easy to prove that $r(t)$ satisfies the system 
	\allowdisplaybreaks
	\begin{eqnarray}\label{trans1}
	dr(t) &=& \Big[\langle y(t), A(t)y(t)\rangle + \langle y(t), \frac{F(t,x(t))}{\|x(t)\|}\rangle \Big] r(t)dt + \langle y(t), C(t)y(t)\rangle r(t) d\omega(t), 
	\end{eqnarray} 
	where 
	\allowdisplaybreaks
	\begin{eqnarray}\label{eqy}
	dy(t) &=& \frac{r(t) d x(t) - x(t) d r(t)}{r(t)^2} \nonumber \\
	&=& \Big[A(t)y(t) - y(t) \langle y(t), A(t)y(t)\rangle + \frac{F(t,x(t))}{\|x(t)\|} - y(t) \langle y(t), \frac{F(t,x(t))}{\|x(t)\|}\rangle \Big] dt \nonumber \\
	&&+ [C(t)y(t) - y(t) \langle y(t), C(t)y(t)\rangle] d\omega(t).
	\end{eqnarray}
	Again using the integration by parts, we can prove that
	\begin{eqnarray}\label{trans2}
	d\log r(t) &=& \Big[\langle y(t), A(t)y(t)\rangle + \langle y(t), \frac{F(t,x(t))}{\|x(t)\|}\rangle \Big] dt + \langle y(t), C(t)y(t)\rangle d\omega(t),
	\end{eqnarray}
	or in the integration form
	\allowdisplaybreaks
	\begin{eqnarray*}
	\log r(t) = \log r(0) + \int_0^t \Big[\langle y(s), A(s)y(s)\rangle + \langle y(s), \frac{F(s,x(s))}{\|x(s)\|}\rangle \Big] ds+ \int_0^t  \langle y(s), C(s)y(s)\rangle d\omega(s).
	\end{eqnarray*}
	Due to \eqref{lipF}, $\Big\|\frac{F(t,x)}{\|x\|}\Big\| \leq f(t)$ for any $x \ne 0$, hence
	\allowdisplaybreaks
	\begin{eqnarray}\label{trans3}
	&&\frac{1}{t}\log r(t) \notag\\
	&\leq& \frac{1}{t}\log r(0) + \frac{1}{t} \int_0^t \Big[\langle y(s), A(s)y(s)\rangle + \Big|\langle y(s), \frac{F(s,x(s))}{\|x(s)\|}\rangle \Big| \Big]ds + \frac{1}{t} \Big|\int_0^t  \langle y(s), C(s)y(s)\rangle d\omega(s)\Big| \nonumber\\
	&\leq& \frac{1}{t}\log r(0) - \frac{1}{t}\int_0^t [h(s)-f(s)]ds+ \frac{1}{t} \Big| \int_0^t \langle y(s), C(s)y(s)\rangle d\omega(s) \Big|.
	\end{eqnarray}
	
	{\bf Step 2:} To estimate the third term in the right hand side of \eqref{trans3}, we use the discretization scheme. Note that 
	\begin{eqnarray*}
		\Big| \int_{k}^{k+1} \langle y(s), C(s)y(s) \rangle d\omega(s)\Big| 
		&\leq&  \ltn \omega \rtn_{p{\rm-var},\Delta_k} \left( |\langle y(k),C(k) y(k)\rangle |+ K \ltn \langle y, Cy\rangle\rtn_{q{\rm-var},\Delta_k}\right)\\
		&\leq&  \ltn \omega \rtn_{p{\rm-var},\Delta_k} \left( \|C(k)\|+ K \ltn \langle y, Cy\rangle\rtn_{q{\rm-var},\Delta_k}\right), 
	\end{eqnarray*}
due to the fact that $\|y(t)\| = 1$ where 
	\begin{eqnarray*}
		\ltn\langle y, Cy \rangle\rtn_{q{\rm-var},\Delta_k} &\leq& \|y\|_{\infty,\Delta_k} \ltn Cy\rtn_{q{\rm-var},\Delta_k} + \ltn y\rtn_{q{\rm-var},\Delta_k} \|Cy\|_{\infty,\Delta_k}\\
		&\leq& \ltn C \rtn_{q{\rm-var},\Delta_k} \|y\|_{\infty,\Delta_k} + \|C\|_{\infty,\Delta_k} \ltn y\rtn_{q{\rm-var},\Delta_k} + \ltn y\rtn_{q{\rm-var},\Delta_k} \|C\|_{\infty,\Delta_k}\|y\|_{\infty,\Delta_k}\\	
		&\leq& 2 \| C \|_{\infty,\Delta_k} \ltn y \rtn_{q{\rm-var},\Delta_k} + \ltn C \rtn_{q{\rm-var},\Delta_k}.
	\end{eqnarray*}
	Hence,
	\begin{eqnarray}\label{qvarest}
	&&\Big| \int_{k}^{k+1} \langle y(s), C(s)y(s) \rangle d\omega(s) \Big| \notag\\
	&\leq& \ltn \omega\rtn_{p{\rm-var},\Delta_k} \Big(\|C(k)\|+ 2K \|C\|_{\infty,\Delta_k}\ltn y\rtn_{q{\rm-var},\Delta_k}+ K \ltn C\rtn_{q{\rm-var},\Delta_k}\Big) \nonumber \\
	&\leq&K \|C\|_{q{\rm-var},\Delta_k}\ltn\omega\rtn_{p{\rm-var},\Delta_k}\Big(1+  2\ltn y\rtn_{q{\rm-var},\Delta_k} \Big).
	\end{eqnarray}
	On the other hand, from \eqref{eqy} we derive that $y$ satisfies the equation:
	\begin{eqnarray*}
		y(t) -y(0) &=&\int_0^t \Big[A(s)y(s) - y(s) \langle y(s), A(s)y(s)\rangle + \frac{F(s,x(s))}{\|x(s)\|} - y(s) \langle y(s), \frac{F(s,x(s))}{\|x(s)\|}\rangle \Big] ds \\
		&&+ \int_0^t[C(s)y(s) - y(s) \langle y(s), C(s)y(s)\rangle] d\omega(s)\\
		&=: &I(y)(t)+J(y)(t), \quad \forall t\geq 0,
	\end{eqnarray*} 
	hence for all $0<a \leq b$
	\begin{eqnarray*}
		\ltn y\rtn_{q{\rm-var},[a,b]}\leq \ltn I(y) \rtn_{q{\rm-var},[a,b]}+ \ltn J(y)\rtn_{q{\rm-var},[a,b]}.
	\end{eqnarray*} 
	Since $\|y(t)\| = 1$, a direct computation shows that for $ 0\leq a<b$,
	\begin{equation*}\label{Iest}
	\ltn I(y)  \rtn_{q{\rm-var},[a,b]}\leq(b-a) \left(2\|A\|_{\infty,[a,b]} + 2 \|f\|_{\infty,[a,b]}\right), 
	\end{equation*}
	and 
	\begin{eqnarray*}\label{Jest}
	&&\ltn J(y) \rtn_{q{\rm-var},[a,b]}\\
	&\leq & K \ltn \omega\rtn_{p{\rm-var},[a,b]} \Big( \|Cy\|_{\infty,[a,b]}+ \| y \langle y, Cy\rangle\|_{\infty,[a,b]} + \ltn Cy\rtn_{q{\rm-var},[a,b]}  +\ltn y \langle y, Cy\rangle\rtn_{q{\rm-var},[a,b]} \Big) \nonumber\\
	&\leq & K \ltn \omega\rtn_{p{\rm-var},[a,b]}\Big(2 \|C\|_{\infty,[a,b]}+2\ltn C \rtn_{q{\rm-var},[a,b]}+ 4\|C\|_{\infty,[a,b]}\ltn y \rtn_{q{\rm-var},[a,b]}\Big) \nonumber\\
	&\leq & 4K \|C\|_{q{\rm-var},[a,b]}\ltn \omega\rtn_{p{\rm-var},[a,b]}(1+\ltn y \rtn_{q{\rm-var},[a,b]}) .
	\end{eqnarray*} 
	Put $\hat{A}_k:=\|A\|_{\infty,\Delta_k} + \|f\|_{\infty,\Delta_k}$ and $\hat{C}_k:=\|C\|_{q{\rm-var},  \Delta_k}, k\in \N$. Then for $[a,b]\subset \Delta_k$
	\begin{eqnarray*}
	\ltn y\rtn_{q{\rm-var},[a,b]} &\leq& \max\{2\hat{A}_k,4K\hat{C}_k\} \Big[(b-a)+\ltn \omega\rtn_{p{\rm-var},[a,b]} \Big]\Big(1+ \ltn y\rtn_{q{\rm-var},[a,b]}\Big).
	\end{eqnarray*} 
	By applying Lemma \ref{lem70} we obtain 
	\begin{eqnarray}\label{yqvarest}	
	\ltn y\rtn_{q{\rm-var},\Delta_k}&\leq& 2 G_k\max \Big\{ 1+\ltn \omega\rtn_{p{\rm-var},\Delta_k}, 1 +\ltn \omega\rtn^p_{p{\rm-var},\Delta_k}\Big\}\notag\\
	&\leq&  2 G_k\Big( 1+\ltn \omega\rtn_{p{\rm-var},\Delta_k}+\ltn \omega\rtn^p_{p{\rm-var},\Delta_k}\Big),
	\end{eqnarray}
	where
	\[
	G_k:= \max\Big\{8\hat{A}_k,16K\hat{C}_k, 8^p\hat{A}_k^p,16^pK^p\hat{C}_k^p\Big\}.
	\]
	For any $t \in [m,m+1]$,
	\begin{eqnarray}\label{trans4}
	&&\frac{1}{t}  \Big| \int_0^t \langle y(s), C(s)y(s)\rangle d\omega(s) \Big|\notag\\
	 &\leq& \frac{1}{m} \left(\sum_{k=0}^{m -1}\left | \int_{k}^{k+1} \langle y(s), C(s)y(s) \rangle d\omega(s)\right |+ \left | \int_{m}^{t} \langle y(s), C(s)y(s) \rangle d\omega(s)\right |\right) \notag\\
	&&\leq \frac{K}{m}  \sum_{k=0}^m \hat{C}_k\ltn \omega\rtn_{p{\rm-var},\Delta_k} \Big(1+2\ltn y\rtn_{q{\rm-var},\Delta_k} \Big).
	\end{eqnarray}
	Combining \eqref{trans4} with \eqref{trans3} and \eqref{yqvarest}, we get
	\begin{eqnarray}\label{rate}
	\varlimsup_{t\to \infty}\frac{1}{t}\log r(t) 
	&\leq& -h_0 +  \varlimsup_{m\to \infty} \frac{K}{(m+1)}  \sum_{k=0}^m \hat{C}_k\ltn \omega\rtn_{p{\rm-var},\Delta_k} \notag \\
	&&+ \varlimsup_{m\to \infty}\frac{4K}{(m+1)}  \sum_{k=0}^m  \hat{C}_k G_k \Big(\ltn \omega\rtn_{p{\rm-var},\Delta_k}+\ltn \omega\rtn^{2}_{p{\rm-var},\Delta_k}+ \ltn \omega\rtn_{p{\rm-var},\Delta_k}^{p+1}\Big).\notag\\
	\end{eqnarray}

	{\bf Step 3.} Using H\"older inequality, the second term in \eqref{rate} can be estimated as follows
	\begin{eqnarray*}
		\varlimsup_{m\to \infty} \frac{K}{(m+1)}  \sum_{k=0}^m \hat{C}_k\ltn \omega\rtn_{p{\rm-var},\Delta_k} &\leq &K\varlimsup_{m\to \infty} \Big(\frac{1}{m+1}  \sum_{k=0}^m \hat{C}^2_k\Big)^{\frac{1}{2}}  \varlimsup_{m\to \infty} \Big(\frac{1}{m+1}  \sum_{k=0}^m  \ltn\omega\rtn_{p{\rm-var},\Delta_k}^2 \Big)^{\frac{1}{2}} \\
		&\leq& K\hat{C} \Gamma(\omega,2).
	\end{eqnarray*}
	Similarly, we get the estimates for the other terms at the right hand side of \eqref{rate} so that
	\begin{eqnarray*}
		\varlimsup_{t\to \infty}\frac{1}{t}\log r(t) &\leq& - h_0 +K \hat{C}\Gamma(\omega,2) \\ 
		&&+4K\varlimsup_{m\to \infty} \Big(\frac{1}{m+1}  \sum_{k=0}^m  \hat{C}^2_k G_k^2\Big)^{\frac{1}{2}}\Big( \Gamma(\omega,2)+ \Gamma(\omega,4)^{2}+\Gamma(\omega,2p+2)^{p+1}\Big),
	\end{eqnarray*}
	where all the values of $\Gamma$ are finite due to assumption \eqref{omegagrowth}. To estimate the average of $\hat{C}^2_k G_k^2$, observe that
	\allowdisplaybreaks
	\begin{eqnarray*}
		&& \varlimsup_{m\to \infty} \left(\frac{1}{m+1}  \sum_{k=0}^m \hat{A}_k^2\hat{C}_k^2\right)^{\frac{1}{2}} \leq  \varlimsup_{m\to \infty} \left(\frac{1}{m+1}  \sum_{k=0}^m \hat{A}_k^4\right)^{\frac{1}{4}} \varlimsup_{m\to \infty} \left(\frac{1}{m+1}  \sum_{k=0}^m \hat{C}_k^4\right)^{\frac{1}{4}}\leq \hat{A}\hat{C};\\
		&&\varlimsup_{m\to \infty} \left(\frac{1}{m+1}  \sum_{k=0}^m \hat{C}_k^4\right)^{\frac{1}{2}}\leq  \hat{C}^2;\\
		&&\varlimsup_{m\to \infty} \left(\frac{1}{m+1}  \sum_{k=0}^m \hat{A}_k^{2p}\hat{C}_k^2\right)^{\frac{1}{2}} \leq  \varlimsup_{m\to \infty} \left(\frac{1}{m+1}  \sum_{k=0}^m \hat{A}_k^{4p}\right)^{\frac{1}{4}} \varlimsup_{m\to \infty} \left(\frac{1}{m+1}  \sum_{k=0}^m \hat{C}_k^4\right)^{\frac{1}{4}}\leq \hat{A}^p\hat{C};\\
		&&\varlimsup_{m\to \infty} \left(\frac{1}{m+1}  \sum_{k=0}^m \hat{C}_k^{2(p+1)}\right)^{\frac{1}{2}}= \hat{C}^{p+1}.
	\end{eqnarray*}
	Hence
	\begin{eqnarray*}
		&&\varlimsup_{m\to \infty} \Big(\frac{1}{m+1}  \sum_{k=0}^m \hat{C}^2_k G_k^2\Big)^{\frac{1}{2}} \leq
		\max \{8\hat{A}\hat{C}, 16K\hat{C}^2, 8^p\hat{A}^p\hat{C}, 16^pK^p \hat{C}^{1+p} \} = \hat{C}\hat{G}.
	\end{eqnarray*}
As a result
	\begin{eqnarray*}
		\varlimsup_{t\to \infty}\frac{1}{t}\log r(t) &\leq& -h_0 +  K(1+ 4\hat{G}) \hat{C} \Big( \Gamma(\omega,2) +  \Gamma(\omega,4)^{2}+\Gamma(\omega,2p+2)^{p+1}\Big)<0,
	\end{eqnarray*}
	due to \eqref{criterion} which proves the exponentially asymptotical stability of the zero solution of system \eqref{lin1}.
\end{proof}

\begin{corollary}\label{Phi}
	Consider the equation
	\begin{equation}\label{AutoLinear}
	dz(t) = Az(t) dt + Cz(t) d\omega(t)
	\end{equation}
	in which $A,C\in \R^{d\times d}$, $A$ is negative definite, i.e. there exists constant $ h_A>0$ such that 
	\begin{equation}\label{conditions1}
	\langle x, Ax \rangle \leq - h_A \|x\|^2,\quad \forall x\in \R^d. 
	\end{equation}
	Denote by $\Phi(t,\omega)$ the matrix solution of \eqref{AutoLinear}, $\Phi(0,\omega)=Id$. Then for any given $\delta>0$
	\begin{equation}\label{phiest}
	\|\Phi(t,\omega)\| \leq \exp \Big\{ -h_A t + \delta + \max\{\|C\|, \|C\|^{p}\} \kappa (t,\omega) \Big\},\quad  \forall t \in [0,1]
	\end{equation}
	where
	\begin{eqnarray}
	G &:=& \max\Big\{8 \|A\|,16K \|C\|, 8^p\|A\|^p,16^pK^p \|C\|^p\Big\},\label{GA} 
	\end{eqnarray}
	and 
	\begin{eqnarray}\label{kappa}
	\kappa (t,\omega) := \frac{1}{\delta^{p-1}} \ltn \omega \rtn_{p{\rm -var},[0,t]}^{p}+ 4K G \ltn \omega \rtn_{p {\rm - var},[0,t]} \Big(t + \ltn \omega \rtn_{p {\rm - var},[0,t]} + \ltn \omega \rtn_{p {\rm - var},[0,t]}^p\Big).
	\end{eqnarray}
\end{corollary}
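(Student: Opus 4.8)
The plan is to read \eqref{AutoLinear} as the autonomous, unforced special case of \eqref{lin1} (take $F\equiv 0$, $A(t)\equiv A$, $C(t)\equiv C$) and to rerun the polar-coordinate estimate from the proof of Theorem~\ref{stabfSDElin}, but on the single interval $[0,t]$ instead of the unit blocks $\Delta_k$. The first point is that the operator norm satisfies $\|\Phi(t,\omega)\| = \sup_{\|z_0\|=1}\|z(t)\|$, where $z(\cdot)$ solves \eqref{AutoLinear} with $z(0)=z_0$; hence it suffices to bound $\|z(t)\|/\|z(0)\|$ by the exponential in \eqref{phiest}, \emph{uniformly} in $z_0\ne 0$. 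Because the zero path is the unique solution through the origin and solutions are unique in both time directions (Theorem~\ref{existence}), every $z$ with $z_0\ne 0$ stays nonzero, so I set $r(t)=\|z(t)\|$, $y(t)=z(t)/r(t)$ and use \eqref{trans2}. With $F\equiv 0$ and \eqref{conditions1} giving $\langle y,Ay\rangle\le -h_A$ (as $\|y\|=1$), this yields
\[
\log\frac{r(t)}{r(0)} \le -h_A t + \Big|\int_0^t \langle y(s),Cy(s)\rangle\,d\omega(s)\Big|.
\]

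Next I would estimate the Young integral over $[0,t]$ by the Young--Loeve bound \eqref{YL0} together with the $q$-variation computation behind \eqref{qvarest}. Since $C$ is constant, $\ltn C\rtn_{q{\rm-var},[0,t]}=0$ and $\|C\|_{\infty}=\|C\|$, so that bound collapses to
\[
\Big|\int_0^t \langle y,Cy\rangle\,d\omega\Big| \le \|C\|\,\ltn\omega\rtn_{p{\rm-var},[0,t]}\big(1+2K\ltn y\rtn_{q{\rm-var},[0,t]}\big).
\]
To control $\ltn y\rtn_{q{\rm-var},[0,t]}$ I would run the derivation of \eqref{yqvarest} from the $y$-equation \eqref{eqy} on an arbitrary subinterval $[a,b]$, obtaining $\ltn y\rtn_{q{\rm-var},[a,b]}\le \max\{2\|A\|,4K\|C\|\}\,[(b-a)+\ltn\omega\rtn_{p{\rm-var},[a,b]}](1+\ltn y\rtn_{q{\rm-var},[a,b]})$, and then invoke Lemma~\ref{lem70} with $b=\max\{2\|A\|,4K\|C\|\}$. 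The resulting constant $C(b)=2\max\{(4b)^p,4b\}$ is precisely $2G$ for $G$ as in \eqref{GA}, and using $t\le 1$ (so $t^p\le t$) I get $\ltn y\rtn_{q{\rm-var},[0,t]}\le 2G\big(t+\ltn\omega\rtn_{p{\rm-var},[0,t]}+\ltn\omega\rtn^p_{p{\rm-var},[0,t]}\big)$.

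Substituting this into the integral bound turns the bilinear term into $4KG\|C\|\ltn\omega\rtn_{p{\rm-var},[0,t]}(t+\ltn\omega\rtn_{p{\rm-var},[0,t]}+\ltn\omega\rtn^p_{p{\rm-var},[0,t]})$, which, after $\|C\|\le\max\{\|C\|,\|C\|^p\}$, is exactly the second summand of $\max\{\|C\|,\|C\|^p\}\kappa$. It remains to absorb the leftover linear term $\|C\|\ltn\omega\rtn_{p{\rm-var},[0,t]}$. Here I would apply the elementary Young inequality $u\le \delta+u^p/\delta^{p-1}$ (valid for $u\ge 0$, $\delta>0$, $p\ge 1$) with $u=\|C\|\ltn\omega\rtn_{p{\rm-var},[0,t]}$, giving $\|C\|\ltn\omega\rtn_{p{\rm-var},[0,t]}\le \delta+\|C\|^p\ltn\omega\rtn^p_{p{\rm-var},[0,t]}/\delta^{p-1}\le \delta+\max\{\|C\|,\|C\|^p\}\ltn\omega\rtn^p_{p{\rm-var},[0,t]}/\delta^{p-1}$, matching the first summand of $\kappa$ in \eqref{kappa}. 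Collecting terms gives $\log(r(t)/r(0))\le -h_A t+\delta+\max\{\|C\|,\|C\|^p\}\kappa(t,\omega)$; since this estimate does not depend on the starting direction, taking the supremum over $\|z_0\|=1$ establishes \eqref{phiest}.

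The genuinely new step, and the only delicate one, is the last splitting: producing the additive constant $\delta$ and the $\delta^{-(p-1)}$-weighted $\ltn\omega\rtn^p$ term from the bare linear term via the numerical Young inequality. Everything else is recycling of the proof of Theorem~\ref{stabfSDElin} with $F\equiv0$ and constant coefficients, so the main effort is the constant bookkeeping needed to land exactly on $G$ and $\kappa$, together with the observation that the $q$-variation bound for $y$ is uniform in the initial direction and therefore transfers to the operator norm $\|\Phi(t,\omega)\|$.
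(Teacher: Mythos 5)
Your proposal is correct and follows essentially the same route as the paper's own proof: polar coordinates via \eqref{trans2}, the Young--Loeve bound for $\big|\int_0^t \langle y,Cy\rangle\,d\omega\big|$ with the constant-$C$ simplification, Lemma \ref{lem70} giving $\ltn y\rtn_{q{\rm-var},[0,t]}\le 2G\big(t+\ltn\omega\rtn_{p{\rm-var},[0,t]}+\ltn\omega\rtn^p_{p{\rm-var},[0,t]}\big)$ on $[0,1]$, and the numerical Young inequality $u\le\delta+u^p/\delta^{p-1}$ to absorb the linear term $\|C\|\ltn\omega\rtn_{p{\rm-var},[0,t]}$ into $\delta+\frac{1}{\delta^{p-1}}\big(\|C\|\ltn\omega\rtn_{p{\rm-var},[0,t]}\big)^p$, which is exactly the inequality the paper states at the outset of its proof. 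The constant bookkeeping ($C(b)=2G$) and the uniformity in the initial direction yielding the operator-norm bound also match the paper's argument.
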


\begin{proof}
	First, it can be seen that
	\[
	 \|C\| \ltn \omega \rtn_{p{\rm -var},[0,t]} \leq \delta + \frac{1}{\delta^{p-1}} \Big(\|C\| \ltn \omega \rtn_{p{\rm -var},[0,t]} \Big)^p
	\]
	For any $x_0\in \R^d$, it follows from \eqref{yest} that for any $t\in [0,1]$ and $y(t) = \frac{\Phi(t,\omega)x_0}{\|\Phi(t,\omega)x_0\|}$
	\begin{eqnarray*}
		\log \|\Phi(t,\omega)x_0\| &=& \int_0^t \langle y(s),Ay(s) \rangle ds + \int_0^t \langle y(s), C y(s) \rangle d\omega(s) \notag \\
		&\leq& -h_A t + \|C\| \ltn \omega \rtn_{p {\rm - var},[0,t]} + 2 K \|C\|  \ltn \omega \rtn_{p {\rm - var},[0,t]}  \ltn y \rtn_{q {\rm - var},[0,t]} \notag \\
		&\leq& - h_A t +\delta + \frac{1}{\delta^{p-1}} \Big(\|C\| \ltn \omega \rtn_{p{\rm -var},[0,t]} \Big)^{p} \\
		&& + 4K \|C\|  \ltn \omega \rtn_{p {\rm - var},[0,t]} G \Big(\max \{t,t^p\} + \ltn \omega \rtn_{p {\rm - var},[0,t]} + \ltn \omega \rtn_{p {\rm - var},[0,t]}^p\Big) \notag\\
		&\leq& - h_A t +  \delta + \frac{1}{\delta^{p-1}} \Big(\|C\| \ltn \omega \rtn_{p{\rm -var},[0,t]} \Big)^{p} \\
		&& + 4K G\|C\|  \ltn \omega \rtn_{p {\rm - var},[0,t]} \Big(t + \ltn \omega \rtn_{p {\rm - var},[0,t]} + \ltn \omega \rtn_{p {\rm - var},[0,t]}^p\Big)\\
		&\leq& - h_A t + \delta + \max\{\|C\|^{p}, \|C\|\} \kappa(t,\omega),
	\end{eqnarray*}
	which proves \eqref{phiest}. 
\end{proof}

\begin{remark}\label{comparison}
	(i), In \cite{GAKLBSch2010} and \cite{ducGANSch18} the authors develop the semigroup method to estimate the H\"older norm of $y$ on intervals $\tau_k, \tau_{k+1}$ where $\tau_k$ is a sequence of stopping times
		\[
		\tau_0 = 0, \tau_{k+1} - \tau_k + \ltn x \rtn_{\beta,[\tau_k,\tau_{k+1}]} = \mu
		\]
		for some $\mu \in (0,1)$ and $\beta > \frac{1}{p}$, which leads to the estimate of the exponent 
		\[
		- \Big(h_A - Q e^{h_A}\max\{C_f,\|C\|\} \frac{n}{\tau_n} \Big), 
		\]
		 where $h_A$ is given in \eqref{conditions1}, $C_f$ is the Lipchitz constant of $f$ and $Q$ is  generic constant independent of $A, f, C, \omega$. It is then proved that there exists $\liminf \limits_{n \to \infty} \frac{\tau_n}{n} = \frac{1}{d}$, where $d = d(\mu)$ depends on the moment of the stochastic noise. As such the rate of exponential convergence of the solution to zero can be estimated as 
		\begin{equation}\label{criterion2}
		-\Big(h_A - Q e^{h_A}\max\{C_f,\|C\|\} d\Big). 
		\end{equation}
		However, it is required from the stopping time analysis (see \cite[Section 4]{ducGANSch18}) that the stochastic noise has to be small in the sense that the moment of H\"older semi-norm $\ltn \omega \rtn_{\beta,[-1,1]}$ must be controlled as small as possible. On the other hand, when reduced to the case without noise, i.e. $C \equiv 0$, \eqref{criterion2} implies a very rough criterion for exponential stability of the ordinary differential equation
		\begin{equation}\label{oldcriterion}
		C_f \leq \frac{1}{Q} h_A e^{-h_A}.
		\end{equation}
By contrast, if $A,C$ are constant matrices and $f(t) \equiv C_f$, condition \eqref{criterion} is satisfied if
	\begin{eqnarray}\label{criterion1}
	h_A - C_f &>&K \|C\| (1 +4G)\Big\{\Gamma(\omega,2) +  \Gamma(\omega,4)^{2}+\Gamma(\omega,2+2p)^{1+p} \Big\},
	\end{eqnarray}
	where $G$ is given by \eqref{GA}. The left and the right hand sides of criteria \eqref{criterion} and \eqref{criterion1} therefore can be interpreted as, respectively, the decay rate of the drift term and the intensity of the volatility term. In this sense, criteria \eqref{criterion} and \eqref{criterion1} have the same form as the one below
	\begin{equation}\label{itocriterion}
	\liminf \limits_{t \to \infty} \frac{1}{t} \int_0^t [h(s)-f(s)]ds > \limsup \limits_{t \to \infty} \frac{1}{t} \int_0^t \|C(s)\|^2 ds
	\end{equation}
	for stochastic system driven by a standard Brownian motion (see e.g. \cite{mao}). Indeed, using Hypotheses (${\textbf H}_1$) -- (${\textbf H}_2$) and estimate \eqref{Itoest}, it follows that
	\begin{eqnarray*}
	d E \|x(t)\|^2 &=& E \Big(2 \langle x(t), A(t) x(t) \rangle +  2 \langle x(t), F(t,x(t)) \rangle + \|C(t) x(t)\|^2 \Big) dt \\
	&\leq& \Big(- h(t) + f(t) + \|C(t)\|^2 \Big) E  \|x(t)\|^2 
	\end{eqnarray*}
which then derives the exponential stability given \eqref{itocriterion}.\\ 
In addition, since $\|C\| (1 +4G)$ is an increasing function of $\|C\|$, criterion \eqref{criterion1} is satisfied in case the driving noise $\omega$ is small in the sense that the quantity in the brackets $\{\dots\}$ is small enough, or in case $\|C\|$ is small. 	
 Moreover, for ordinary differential equations, criteria \eqref{criterion} and \eqref{criterion1} reduce to $h_A>C_f$, which is the classical criterion and is much better than \eqref{oldcriterion} for dissipative systems. Therefore criteria \eqref{criterion} and \eqref{criterion1} can be viewed as a better generalization of the classical results on exponential stability for dissipative systems.

(ii), Regarding to system \eqref{AutoLinear}, we could have, in some special cases, better estimates than \eqref{phiest}. In particular, if $A$ and $C$ are commute, then a direct computation shows that
\begin{equation}\label{phiformula}
\Phi(t,\omega) = \exp \{ At + C \omega(t)\},\quad \forall t \geq 0.
\end{equation}
As a result,
\begin{equation}\label{phinormest}
\|\Phi(t,\omega)\| \leq \|e^{At}\| \|e^{C\omega(t)}\| \leq \|e^{At}\| e^{\|C\||\omega(t)|},\quad \forall t \geq 0.
\end{equation}
Therefore, under the assumption that
\[
\limsup \limits_{t \to \infty} \frac{\omega(t)}{t} = 0,
\]
(which is often satisfied for almost alls realization $\omega$ of a fractional Brownian motion), it follows that
\[
\limsup \limits_{t \to \infty} \frac{1}{t} \log \|\Phi(t,\omega)\| \leq \limsup \limits_{t \to \infty} \frac{1}{t} \log \|e^{At}\| + \limsup \limits_{t \to \infty} \frac{1}{t} \|C\| |\omega(t)| =  \limsup \limits_{t \to \infty} \frac{1}{t} \log \|e^{At}\|.   
\]
In this situation, the exponential stability criterion of system \eqref{AutoLinear} is then equivalent to the one of the autonomous ordinary differential equation $\dot{z} = A z$, which is equivalent to that $A$ has all eigenvalues with negative real parts. However, since \eqref{phiformula} does not hold in general, we could not obtain \eqref{phinormest} but only the discrete version \eqref{phiest}.

(iii), The strong condition \eqref{negdefh} is still able to cover several interesting cases, for instance if $A(t) \equiv A$ with negative real part eigenvalues. Then there exists a positive definite matrix $Q$, which is the solution of the matrix equation
\[
A^{\rm T} Q^2 + Q^2 A = 2D
\]
where $D$ is a symmetric negative definite matrix \cite[Chapter 2 \& Chapter 5]{burton} such that $\langle Dx,x \rangle \leq - \lambda_D \|x\|^2$. Under the transformation $\tilde{x} = Qx$ the system
\[
dx(t) = [A x(t) + F(t,x(t))]dt  + C(t)x(t) d\omega(t)
\]
will be tranformed to
\begin{eqnarray}
d \tilde{x}(t) &=& \Big[QAQ^{-1} \tilde{x}(t) + QF(t,Q^{-1}\tilde{x}(t)) \Big]dt + QC(t)Q^{-1}\tilde{x}(t)d\omega(t) \notag\\
&=& \Big[ \tilde{A} \tilde{x}(t) + \tilde{F}(t,\tilde{x}(t))\Big] dt + \tilde{C}(t)\tilde{x}(t) d\omega(t),
\end{eqnarray}
where $\tilde{F}$ is globally Lipschitz continuous with $f(t)$ in \eqref{lipF} is replaced by  $\tilde{f}(t) = \|Q\| \|Q^{-1}\| f(t)$; $\hat{A}, \hat{C}$ in \eqref{A-hat} and \eqref{C-hat} are replaced by $\|Q\| \|Q^{-1}\| \hat{A}, \|Q\| \|Q^{-1}\| \hat{C}$; and \eqref{negdefh} is of the form
\begin{eqnarray*}
\langle \tilde{x}, \tilde{A} \tilde{x} \rangle &=& \langle \tilde{x},  \frac{1}{2}[\tilde{A} + \tilde{A}^{\rm T}] \tilde{x}\rangle = \langle\tilde{x}, \frac{1}{2} [QAQ^{-1} + Q^{-1}A^{\rm T} Q] \tilde{x} \rangle \\
&=& \langle Q x,\frac{1}{2} QAQ^{-1}Qx\rangle + \langle Q x, \frac{1}{2} Q^{-1}A^{\rm T} Q^2 x\rangle\\
&=& \langle x,\frac{1}{2} [Q^2 A + A^{\rm T}Q^2]x \rangle \\
&=& \langle x, Dx \rangle \leq - \lambda_D \|x\|^2 \leq - \frac{\lambda_D}{\|Q\|^2} \|\tilde{x}\|^2.
\end{eqnarray*}
Therefore we are still able to apply Theorem \ref{stabfSDElin} with a small modification of conditions \eqref{negdef} and \eqref{criterion}.

(iv), It is important to note that for the nonautonomous situation, the semigroup generated from the method in \cite{ducGANSch18} or \cite{GAKLBSch2010} should be replaced by the two parameter flow $\Psi(t,s)$ generated from the nonautonomous differential equation $\dot{z} = A(t)z$. As a result, all $p-$variation norm estimates for such $\Psi$ would be quite complex to present. Our method however helps overcome this drawback by using Lyapunov type functions, as seen in the proof of Theorem \ref{stabfSDElin}.

\end{remark}

\section{Applications: Existence of random attractors}
In this section we would like to apply the main result to study the following system
\begin{equation}\label{fSDE}
dx(t) = [Ax(t) + f(x(t))]dt + Cx(t)dB^H(t), x(0)=x_0 \in \R^d, 
\end{equation}
where $B^H$ is an one dimensional fractional Brownian motion with Hurst index $H>\frac{1}{2}$; $A$ is negative definite and $f: \R^d \to \R^d$ is globally Lipschitz continuous, i.e. there exist contants $h_A, c_f >0$ such that 
\begin{equation}\label{conditions}
\langle x, Ax \rangle \leq - h_A \|x\|^2,\quad \|f(x) - f(y)\| \leq c_f \|x-y\|, \quad \forall x,y\in \R^d. 
\end{equation}
Given $\frac{1}{2}< \nu < H$ and any time interval $[0,T]$, almost sure all realizations $\omega(\cdot) = B^H(\cdot,\omega)$ belong to the H\"older space $C^{\nu\rm{-Hol}}([0,T],\R)$ (see e.g. \cite[Proposition 1.6]{nourdin}), thus system \eqref{fSDE} can be solved in the pathwise sense and admits a unique solution $x(t,\omega,x_0)$, according to Theorem \ref{existence}. Moreover, it is proved, e.g. in \cite{GAKLBSch2010} that, the solution generates a so-called {\it random dynamical system} defined by $\varphi(t,\omega)x_0 := x(t,\omega,x_0)$ on the probability space $(\Omega,\mathcal{F},\mP)$ equipped with a metric dynamical system $\theta$, i.e. $\theta_{t+s} = \theta_t \circ \theta_s$ for all $t,s \in \R$. Namely, $\varphi: \R \times \Omega \times \R^d \to \R^d$ is a measurable mapping which is also continuous in $t$ and $x_0$ such that the cocycle property
\[
\varphi(t+s,\omega) =\varphi(t,\theta_s \omega) \circ \varphi(s,\omega),\quad \forall t,s \in \R, 
\] 
is satisfied \cite{arnold}. It is important to note that, given the probability space as $\Omega = \cC_0(\R,\R)$ of continuous functions on $\R$ vanishing at zero, with the Borel sigma-algebra $\cF$, the Wiener shift $\theta_t \omega (\cdot) = \omega(t+ \cdot)-\omega(t)$ and the Wiener probability $\mP$, it follows from \cite[Theorem 1]{GASch} that one can construct an invariant probability measure $\mP^H = B^H \mP$ on the subspace $\cC^\nu$ such that $B^H \circ \theta = \theta \circ B^H$, and $\theta$ is ergodic. \\
Following \cite{arnold-schmalfuss},\cite{crauel-flandoli}, we call a set $\hat{M} =
\{M(\omega)\}_{\omega \in \Omega}$ a {\it random set}, if $\omega
\mapsto d(x|M(\omega))$ is $\cF$-measurable for each $x \in \R^d$, where $d(E|F) = \sup\{\inf\{d(x, y)|y \in F\} | x \in E\}$  for $E,F$ are nonempty subset of $\R^d$ and $d(x|E) = d(\{x\}|E)$.  
Given a continuous random dynamical system $\varphi$ on $\R^d$.  An {\it universe} $\cD$ is a family of random sets
which is closed w.r.t. inclusions (i.e. if $\hat{D}_1 \in \cD$ and
$\hat{D}_2 \subset \hat{D}_1$ then $\hat{D}_2 \in \cD$). In our setting, we define the universe $\cD$ to be a family of random sets $D(\omega)$
which is {\it tempered} (see e.g. \cite[pp. 164, 386]{arnold}), namely$D(\omega)$ belongs to the ball $B(0,\rho(\omega))$ for all $\omega\in\Omega$  where the radius $\rho(\omega) >0$ is a {\it tempered random varible}, i.e.
\begin{equation}\label{tempered}
\lim \limits_{t \to \pm \infty} \frac{1}{t} \log \rho(\theta_{t}\omega) =0.
\end{equation}
An
invariant random compact set $\mathcal{A}  \in \cD$ is called a {\it pullback random attractor} in $\cD$, if $\mathcal{A} $ attracts
any closed random set $\hat{D} \in \cD$ in the pullback sense,
i.e.
\begin{equation}\label{pullback}
\lim \limits_{t \to \infty} d(\varphi(t,\theta_{-t}\omega)
\hat{D}(\theta_{-t}\omega)| \mathcal{A} (\omega)) = 0.
\end{equation}
Similarly, $\mathcal{A} $ is called a {\it forward random attractor} in $\cD$, if $\mathcal{A}$ attracts
any closed random set $\hat{D} \in \cD$ in the forward sense, i.e.
\begin{equation*}\label{eq3.6}
\lim \limits_{t \to \infty} d(\varphi(t,\omega) \hat{D}(\omega)|
\mathcal{A}(\theta_t \omega)) = 0.
\end{equation*}
The existence of a random pullback attractor follows from the existence of a random pullback absorbing set
(see \cite{crauel-flandoli},\cite{schenk-hoppe01}). A random set $\mathcal{B}  \in \cD$ is called {\it pullback
	absorbing} in a universe $\cD$ if $\mathcal{B} $ absorbs all sets in
$\cD$, i.e. for any $\hat{D} \in \cD$, there exists a time $t_0 =
t_0(\omega,\hat{D})$ such that
\begin{equation}\label{absorb}
\varphi(t,\theta_{-t}\omega)\hat{D}(\theta_{-t}\omega) \subset
\mathcal{B} (\omega), \ \textup{for all}\  t\geq t_0.
\end{equation}
Given a universe $\cD$ and a random compact
pullback absorbing set $\mathcal{B} \in \cD$, there exists a unique random pullback attractor
(which is then a weak attractor) in $\cD$, given by
\begin{equation}\label{at}
\mathcal{A}(\omega) = \cap_{s \geq 0} \overline{\cup_{t\geq s} \varphi(t,\theta_{-t}\omega)\mathcal{B}(\theta_{-t}\omega)}. 
\end{equation}
The reader is referred to a survey on random attractors in \cite{crauelkloeden15}.

\begin{lemma}
	For $\delta> 0$, the function $\kappa$ defined in \eqref{kappa} satisfies \\
	(i) For all $0<s<t<1$
	\begin{equation}\label{kappa2}
	\kappa(t,\omega) \geq \kappa(s,\omega) + \kappa(t-s,\theta_s \omega),
	\end{equation}
	(ii) 
	For all $0\leq t'\leq 1$
	\begin{equation}\label{kappa3}
	\kappa(1,\theta_{t'}\omega)\leq 2^{p} [\kappa(1,\omega)+\kappa(1,\theta_{1}\omega)].
	\end{equation}
	(iii) $E\ \kappa(1,\omega)<\infty$.
\end{lemma}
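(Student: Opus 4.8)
The plan is to reduce all three claims to standard properties of the $p$-variation seminorm together with a few elementary power inequalities. Writing $\overline{\omega}(s,t):=\ltn\omega\rtn_{p{\rm -var},[s,t]}^{p}$, this map is a superadditive control, so $\ltn\omega\rtn_{p{\rm -var},[s,u]}^{p}\ge\ltn\omega\rtn_{p{\rm -var},[s,t]}^{p}+\ltn\omega\rtn_{p{\rm -var},[t,u]}^{p}$ for $s\le t\le u$, whereas the seminorm itself is subadditive (by Minkowski's inequality) and monotone in the interval; moreover the Wiener shift obeys $\ltn\theta_s\omega\rtn_{p{\rm -var},[0,u]}=\ltn\omega\rtn_{p{\rm -var},[s,s+u]}$, and for $r\ge1$, $x,y\ge0$ one has $(x+y)^{r}\ge x^{r}+y^{r}$ and $(x+y)^{r}\le 2^{r-1}(x^{r}+y^{r})$. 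For (i) I would set $a:=\ltn\omega\rtn_{p{\rm -var},[0,s]}$, $b:=\ltn\theta_s\omega\rtn_{p{\rm -var},[0,t-s]}=\ltn\omega\rtn_{p{\rm -var},[s,t]}$ and $c:=\ltn\omega\rtn_{p{\rm -var},[0,t]}$, so that superadditivity gives $c^{p}\ge a^{p}+b^{p}$ and hence $c\ge\max\{a,b\}$. Comparing \eqref{kappa} summand by summand, the leading term obeys $\tfrac{1}{\delta^{p-1}}c^{p}\ge\tfrac{1}{\delta^{p-1}}(a^{p}+b^{p})$; the linear-in-time contribution uses $ct=cs+c(t-s)\ge as+b(t-s)$; the quadratic one uses $c^{2}\ge(a^{p}+b^{p})^{2/p}\ge a^{2}+b^{2}$ since $2/p>1$; and the last uses $c^{p+1}\ge(a^{p}+b^{p})^{(p+1)/p}\ge a^{p+1}+b^{p+1}$. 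Multiplying the bracket estimates by $4KG$ and adding the leading term produces \eqref{kappa2}.

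For (ii) I would put $e_1:=\ltn\omega\rtn_{p{\rm -var},[0,1]}$, $e_2:=\ltn\theta_1\omega\rtn_{p{\rm -var},[0,1]}=\ltn\omega\rtn_{p{\rm -var},[1,2]}$ and $d:=\ltn\theta_{t'}\omega\rtn_{p{\rm -var},[0,1]}=\ltn\omega\rtn_{p{\rm -var},[t',t'+1]}$. Because $t'\le1\le t'+1$, subadditivity across the point $1$ followed by monotonicity yields $d\le\ltn\omega\rtn_{p{\rm -var},[t',1]}+\ltn\omega\rtn_{p{\rm -var},[1,t'+1]}\le e_1+e_2$. Substituting $d\le e_1+e_2$ into \eqref{kappa} and applying $(e_1+e_2)^{r}\le2^{r-1}(e_1^{r}+e_2^{r})$ to the powers $r=p,1,2,p+1$ that appear there yields the prefactors $2^{p-1},\,1,\,2,\,2^{p}$; each is at most $2^{p}$ because $p>1$, so collecting the bounds against $\kappa(1,\omega)+\kappa(1,\theta_1\omega)$ gives \eqref{kappa3}.

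For (iii) I would abbreviate $V:=\ltn\omega\rtn_{p{\rm -var},[0,1]}$, so that $\kappa(1,\omega)=\tfrac{1}{\delta^{p-1}}V^{p}+4KG(V+V^{2}+V^{p+1})$ is a polynomial in $V$ of degree $p+1$; since $V^{r}\le1+V^{p+1}$ for $0\le r\le p+1$, finiteness of $E\,\kappa(1,\omega)$ reduces to $E[V^{p+1}]<\infty$. For the driving fractional Brownian motion $\omega=B^{H}$ this is the moment estimate already underlying \eqref{gammaome}: $V$ is a measurable seminorm of a Gaussian process, so by Fernique's theorem it has Gaussian tails and finite moments of every order; equivalently, since almost surely $B^{H}\in C^{\nu{\rm -Hol}}([0,1],\R)$ with $p\nu>1$, one has $V\le\|B^{H}\|_{\nu{\rm -Hol},[0,1]}$, whose moments are finite.

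The computations are otherwise routine, and I expect the only delicate points to be the correct use of superadditivity of the $p$-variation control in (i), which is what makes the single coarse quantity $c$ dominate the two split quantities in every summand, and the external Gaussian-integrability input in (iii) that turns the pathwise bound into an integrable one. In (ii) the sole place a constant could be mishandled is the top-degree term $d^{p+1}$, which saturates the factor $2^{p}$, so I would verify that estimate last.
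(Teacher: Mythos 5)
Your proof is correct, and for parts (i) and (ii) it is essentially the paper's own argument. Part (i) is exactly the observation that $\ltn\omega\rtn_{p{\rm-var}}^p$, $\ltn\omega\rtn_{p{\rm-var}}^2$ and $\ltn\omega\rtn_{p{\rm-var}}^{p+1}$ are superadditive functions of the interval (the paper phrases this as ``they are controls''; you verify it by hand via $(x+y)^r\ge x^r+y^r$ for $r\ge 1$), combined with the same treatment of the term $t\ltn\omega\rtn_{p{\rm-var},[0,t]}$. Part (ii) is the splitting of $[t',t'+1]$ across the point $1$ plus the power inequality $(e_1+e_2)^r\le 2^{r-1}(e_1^r+e_2^r)$; the paper imports this as \cite[Lemma 2.1]{congduchong17} applied on $[0,2]$, while you re-derive it from Minkowski subadditivity of the $p$-variation seminorm, and both routes give the constant $2^p$ saturated by the top power $p+1$. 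The only genuine divergence is part (iii): you reduce to $E\big[\ltn\omega\rtn_{p{\rm-var},[0,1]}^{p+1}\big]<\infty$ and then quote a soft Gaussian integrability fact (Fernique's theorem for Gaussian seminorms, or finiteness of moments of the $\nu$-H\"older norm of $B^H$ for $\nu<H$), whereas the paper runs an explicit computation: with $\nu=1/p$ and $q_0\ge\max\{\tfrac{2}{H-\nu},2p+2\}$ it combines the Garsia--Rodemich--Rumsey estimate \cite[Corollary A2]{friz} with the Gaussian increment moments of $B^H$ \cite[Remark 1.2.2]{mishura} to obtain the explicit bound \eqref{beta1}, $\big(E\ltn B^H\rtn_{p{\rm-var},[0,1]}^{q_0}\big)^{1/q_0}\le 32\sqrt{2(q_0+1)}=:\beta$, and hence \eqref{Ekappa}. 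Your soft route does prove (iii) as stated (modulo the small remark that the $p$-variation seminorm is measurable because the supremum may be restricted to rational partitions, so Fernique applies), but it loses the constant $\beta$, which is not decorative in this paper: $\beta$ reappears in the smallness conditions \eqref{attractorcriterion} and \eqref{criterion3} and in the contraction estimate \eqref{y*}, i.e. the subsequent results quantify how small $\|C\|$ must be in terms of $\beta$. So if this lemma is meant to feed into Theorem \ref{attractor}, you need the paper's quantitative version of (iii) (or some explicit bound on $E\,\kappa(1,\omega)$), not bare finiteness.
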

\begin{proof}
	(i) The inequalitiy holds since $\ltn \omega \rtn_{p {\rm - var},[s,t]}^p, \ltn \omega \rtn_{p {\rm - var},[s,t]}^2$ and $ \ltn \omega \rtn_{p {\rm - var},[s,t]}^{p+1}$ are control functions (see \cite{friz} for details on control functions), meanwhile
	\[
	t\ltn \omega \rtn_{p {\rm - var},[0,t]} + s\ltn \omega \rtn_{p {\rm - var},[t,s+t]} \leq (t+s) \ltn \omega \rtn_{p {\rm - var},[0,t+s]}.
	\]	
	
	(ii) Due to  \cite[Lemma\ 2.1]{congduchong17} if $z$ is an arbitrary function of bounded $p-$variation on $[0,2]$ then
	\[
	\ltn z\rtn^p_{p\rm{-var},[0,2]}\leq 2^{p-1}(\ltn z\rtn^p_{p\rm{-var},[0,1]} +\ltn z\rtn^p_{p\rm{-var},[1,2]}),
	\]
	which implies that for all $n\geq 0$
	\begin{eqnarray*}\label{pvar}
		\ltn z\rtn^n_{p\rm{-var},[0,2]} &\leq& 2^{\frac{(p-1)n}{p}}(\ltn z\rtn^p_{p\rm{-var},[0,1]} +\ltn z\rtn^p_{p\rm{-var},[1,2]} )^{\frac{n}{p}} \notag\\
		&\leq& 2^{\max\{p, n\}-1}\left(\ltn z\rtn^n_{p\rm{-var},[0,1]} + \ltn z\rtn^n_{p\rm{-var},[1,2]}\right).
	\end{eqnarray*}
	Therefore, taking into account the formula \eqref{kappa} defining $\kappa$ we can easily derive \eqref{kappa3}.
	
	(iii) Recall that in this section we consider equation \eqref{fSDE}, hence  $\omega$ is a realization of a fractional Brownian motion $B^H(t,\omega)$. Observe that for $\nu=1/p<H$ and $t>0$ be arbirary, $ \ltn \omega \rtn_{p {\rm -var}, [0,t]}\leq t^{\nu} \ltn \omega \rtn_{\nu {\rm -Hol}, [0,t]}$.
	
	Fix $q_0\geq \max\{\frac{2}{H-\nu}, 2p+2\}, q_0\in \N$. Apply \cite[Corollary A2]{friz} for $\alpha=\nu+\frac{1}{q_0}$ and  \cite[Remark\ 1.2.2, p\ 7]{mishura} we get
	\begin{eqnarray*}
	E\ltn B^H(\cdot,\omega)\rtn^{q_0}_{\nu{\rm -Hol},[0,1]} 
	&\leq &\left( \frac{32(\nu+\frac{2}{q_0})}{\nu}\right)^{q_0} \int_0^{1}\int_0^1\frac{E\|B^H(u,\omega)-B^H(v,\omega)\|^{q_0}}{|u-v|^{\nu q_0+2}} dudv\\
	&\leq & \left( \frac{32(\nu+\frac{2}{q_0})}{\nu}\right)^{q_0} \int_0^{1}\int_0^1 \frac{2^{q_0/2}\Gamma(\frac{q_0+1}{2})}{\sqrt{\pi}}|u-v|^{(H-\nu-2/q_0)q_0} dudv\\
	&\leq & \left(32\sqrt{2(q_0+1)}\right)^{q_0} \frac{2}{[(H-\nu)q_0-1](H-\nu)q_0}\\
	&\leq & \left(32\sqrt{2(q_0+1)}\right)^{q_0},
	\end{eqnarray*}
	in which $\Gamma(n)$ is the Gamma function.	This implies
	\begin{equation}\label{beta1}
	\left(E\ltn B^H(\cdot,\omega)\rtn^{q_0}_{p{\rm -var},[0,1]}\right)^{\frac{1}{q_0}}\leq 32\sqrt{2(q_0+1)}=: \beta,
	\end{equation}
	and since $q_0 \geq 2p+2$ we conclude that
	\begin{equation}\label{Ekappa}
	E\  \kappa(1,\omega)   \leq  \max\{\frac{1}{\delta^{p-1}} ,4K G \} (\beta+\beta^p+\beta^2+ \beta^{p+1})<\infty
	\end{equation}
\end{proof}

Before stating the main result, we need the following results (the technical proofs are provided in the Appendix).

\begin{lemma}[Gronwall-type lemma] \label{lem2}
Assume that $z(\cdot), \alpha(\cdot): [a,b] \to \R^+$ satisfy
\begin{equation}\label{gronwall1}
z(t) \leq z_0 + \int_a^t \alpha(s)ds + \int_a^t \eta z(s) ds,\quad \forall t\in [a,b]
\end{equation}
for some $z_0, \eta>0$.
Then
\begin{equation}\label{gronwall2}
z(t) \leq z_0 e^{\eta (t-a)} + \int_a^t \alpha(s) e^{\eta(t-s)}ds,\quad \forall t\in [a,b].
\end{equation}	
\end{lemma}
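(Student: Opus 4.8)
The plan is to reduce the integral inequality to a linear differential inequality and solve the latter with an integrating factor. First I would introduce the majorant
\[
u(t) := z_0 + \int_a^t \alpha(s)\,ds + \int_a^t \eta z(s)\,ds,
\]
so that the hypothesis \eqref{gronwall1} reads simply $z(t) \leq u(t)$ for all $t \in [a,b]$, with $u(a) = z_0$. Since $\alpha$ and $z$ are nonnegative and the integrals in \eqref{gronwall1} are finite, both integrands are locally integrable, hence $u$ is absolutely continuous on $[a,b]$ and differentiable almost everywhere with $u'(t) = \alpha(t) + \eta z(t)$.

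Next I would exploit the bound $z(t) \leq u(t)$ together with $\eta > 0$ to pass from the exact derivative to a differential inequality: $u'(t) = \alpha(t) + \eta z(t) \leq \alpha(t) + \eta u(t)$, i.e.\ $u'(t) - \eta u(t) \leq \alpha(t)$ for almost every $t$. Multiplying by the integrating factor $e^{-\eta t}$ turns the left-hand side into an exact derivative,
\[
\frac{d}{dt}\Big(e^{-\eta t} u(t)\Big) = e^{-\eta t}\big(u'(t) - \eta u(t)\big) \leq e^{-\eta t}\alpha(t).
\]
Integrating from $a$ to $t$ (legitimate since $e^{-\eta t}u(t)$ is absolutely continuous) and using $u(a) = z_0$ gives $e^{-\eta t}u(t) - e^{-\eta a}z_0 \leq \int_a^t e^{-\eta s}\alpha(s)\,ds$; multiplying through by $e^{\eta t}$ and recalling $z(t) \leq u(t)$ then yields \eqref{gronwall2}.

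The only genuinely delicate point is the regularity required to differentiate $u$ and to invoke the fundamental theorem of calculus, which is supplied by the absolute continuity of $u$; everything else is routine. If one wishes to sidestep all differentiability considerations, an equivalent route is to iterate \eqref{gronwall1} directly, substituting the bound for $z(s)$ into the integrand repeatedly. The $n$-fold iterate reproduces the partial sums of the series for $z_0 e^{\eta(t-a)} + \int_a^t \alpha(s)e^{\eta(t-s)}\,ds$, with a remainder dominated by a term of order $\eta^{n}(t-a)^{n}/n!$ that tends to zero as $n \to \infty$; passing to the limit delivers the same conclusion.
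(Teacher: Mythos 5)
Your proof is correct, and it rests on the same integrating-factor idea as the paper's, but you apply the factor to a different function, which genuinely streamlines the argument. The paper works with $v(t)=\int_a^t z(s)\,ds$ and differentiates $e^{-\eta t}v(t)$: since $v'(t)=z(t)\le z_0+\int_a^t\alpha(s)\,ds+\eta v(t)$ by \eqref{gronwall1}, integration gives a bound on $\int_a^t z(s)\,ds$, which must then be substituted back into \eqref{gronwall1}, and a further integration by parts in the resulting term $\eta\int_a^t e^{\eta(t-s)}\bigl(z_0+\int_a^s\alpha(u)\,du\bigr)\,ds$ is needed to reach \eqref{gronwall2}. You instead differentiate $e^{-\eta t}u(t)$, where $u$ is the full majorant $z_0+\int_a^t\alpha(s)\,ds+\eta\int_a^t z(s)\,ds$; since $u'=\alpha+\eta z\le\alpha+\eta u$ almost everywhere, a single integration already yields $u(t)\le z_0e^{\eta(t-a)}+\int_a^t e^{\eta(t-s)}\alpha(s)\,ds$, and $z\le u$ finishes. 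Your choice of majorant thus eliminates both the back-substitution and the integration by parts, at no cost in generality; the regularity point you flag (absolute continuity of indefinite integrals, so that the fundamental theorem of calculus applies) is implicitly needed in the paper's argument as well, and you handle it explicitly. Your closing sketch of the Picard-type iteration is a third, equally valid route, standard for Gronwall lemmas, though the remainder estimate there deserves the observation that it is controlled by $\eta^n(t-a)^{n-1}\int_a^b z(s)\,ds/(n-1)!$, which tends to zero because $z$ is integrable.
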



\begin{lemma}\label{tempered}
Consider the random variable 
\begin{equation}\label{xi}
\xi(\omega):= 1+ \sum_{k=1}^{\infty} \exp \Big\{\Big(-h+c \frac{1}{k}\sum_{i=0}^{k-1}\kappa(1,\theta_{-i}\omega) \Big)k\Big\},
\end{equation}
where $h$, $c$ are given positive numbers and $\kappa$ is defined by \eqref{kappa}. Then there exists $\varepsilon>0$ such that if $c<\varepsilon$, $\xi(\omega)$ is tempered.
\end{lemma}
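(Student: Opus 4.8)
The plan is to verify the defining property of temperedness, namely $\lim_{t\to\pm\infty}\frac{1}{t}\log\xi(\theta_t\omega)=0$ almost surely. Since every summand in \eqref{xi} is nonnegative we have $\xi\ge 1$, hence $\log\xi\ge 0$; consequently for $t\to+\infty$ the quotient $\frac1t\log\xi(\theta_t\omega)$ is already $\ge 0$ and for $t\to-\infty$ it is already $\le 0$, so in each direction it suffices to produce the matching sublinear upper bound on $\log\xi$. I would reduce the whole statement to the discrete-time growth estimate $\frac1n\log\xi(\theta_{\pm n}\omega)\to 0$, together with a one-step interpolation (via \eqref{kappa3}) transferring it to all real $t$.

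First I record the structural form of the exponent. With $S_k(\omega):=\sum_{i=0}^{k-1}\kappa(1,\theta_{-i}\omega)$ and $S_0=0$ one has $\xi(\omega)=\sum_{k\ge 0}\exp\{-hk+cS_k(\omega)\}$, and more generally
\[
\xi(\theta_n\omega)=\sum_{k\ge 0}\exp\Big\{-hk+c\sum_{j=n-k+1}^{n}\kappa(1,\theta_j\omega)\Big\}.
\]
Because $\kappa(1,\cdot)\in L^1$ — this is precisely \eqref{Ekappa}, so $\bar\kappa:=E\,\kappa(1,\omega)<\infty$ — and $\theta$ is ergodic, Birkhoff's theorem yields the two-sided law of large numbers $\frac1k\sum_{j=m+1}^{m+k}\kappa(1,\theta_j\omega)\to\bar\kappa$ almost surely. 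I would then fix the threshold $\varepsilon:=h/(2^{p+1}\bar\kappa)$, so that for every $c<\varepsilon$ the drift $\eta:=h-2^{p+1}c\,\bar\kappa$ is strictly positive; the factor $2^{p+1}$ is carried along so that the same choice also controls the interpolation series produced by \eqref{kappa3}.

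The core estimate is the control of the running supremum $M_n:=\sup_{k\ge 0}\big(c\sum_{j=n-k+1}^{n}\kappa(1,\theta_j\omega)-hk\big)$, since $\log\xi(\theta_n\omega)\le M_n+\log(\#\{\text{significant }k\})$. Splitting according to $k\le 3n$ and $k>3n$, the two-sided ergodic averages show that for a.e. $\omega$ and all large $n$ the backward block $\sum_{j=n-k+1}^{n}\kappa(1,\theta_j\omega)$ stays below $(\bar\kappa+\delta)k$, so the tail $k>3n$ decays geometrically and contributes $O(1)$, while the head has at most $3n$ terms; combined with $\frac1n\max_{j\le n}\kappa(1,\theta_j\omega)\to 0$ (a consequence of $E\,\kappa(1,\cdot)<\infty$ alone), this gives $\frac1n M_n\to 0$ and $\log\xi(\theta_n\omega)\le M_n+O(\log n)$, hence $\frac1n\log\xi(\theta_n\omega)\to 0$; the identical argument with all indices reflected handles $\theta_{-n}$. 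I expect this step — turning the pointwise ergodic limits into the uniform-in-$k$ bounds needed to dominate a running supremum whose two endpoints both move — to be the main obstacle. The delicate point is that no tail or mixing estimate on $\kappa$ is available: indeed $e^{c\kappa}$ has no finite exponential moment, so $\xi$ itself fails to be integrable and the entire argument must be carried out through the logarithm and the ergodic theorem rather than through $E\xi$.

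Finally I pass to continuous time. For $t\in[n,n+1]$ write $\theta_t=\theta_{t-n}\theta_n$ and apply \eqref{kappa3} to each factor $\kappa(1,\theta_{t-n}\theta_{-i}\theta_n\omega)$; summing over $i$ telescopes to $\sup_{0\le u\le 1}\log\xi(\theta_{n+u}\omega)\le c\,2^{p}\,\kappa(1,\theta_{n+1}\omega)+\log\Xi(\theta_n\omega)$, where $\Xi$ is the series of the same form as $\xi$ but with $c$ replaced by $2^{p+1}c$. Dividing by $n$, the first term tends to $0$ since $\frac1n\kappa(1,\theta_{n+1}\omega)\to 0$, and the second tends to $0$ by the estimate of the previous paragraph applied to $\Xi$ (legitimate precisely because $2^{p+1}c\,\bar\kappa<h$ by the choice of $\varepsilon$). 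Combining this with the sign remark of the first paragraph yields $\lim_{t\to\pm\infty}\frac1t\log\xi(\theta_t\omega)=0$, i.e. $\xi$ is tempered (cf. \cite{arnold}).
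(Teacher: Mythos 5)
Your proposal is correct and follows essentially the same route as the paper's proof: both rest on Birkhoff's ergodic theorem applied to the integrable function $\kappa(1,\cdot)$ (via \eqref{Ekappa}) to make the exponential drift $-h+c\,E\,\kappa(1,\cdot)$ strictly negative for small $c$, both split the shifted series into a polynomially-counted head and a geometrically decaying tail, and both reduce real times to integer times through \eqref{kappa3}. The only notable difference is cosmetic bookkeeping: you organize the estimate through a running supremum $M_n$ and carry the interpolation factor $2^{p+1}$ explicitly into your choice of $\varepsilon$, whereas the paper factors the series algebraically and absorbs that factor implicitly by shrinking $\varepsilon$.
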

Given the universe $\cD$ of tempered random sets with property \eqref{tempered}, our second main result is then formulated as follows.

\begin{theorem}\label{attractor}
Assume that $h_A > c_f$. There exists an $\epsilon >0$ such that under condition $\|C\| < \epsilon$, $\varphi$ possesses a random pullback attractor consisting only of one random point $a(\omega)$ in the universe $\cD$ of tempered random sets. Moreover, every tempered random set converges to the random attractor in the pullback sense with exponential rate.
\end{theorem}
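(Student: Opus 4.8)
The plan is to verify the two ingredients which, by the general theory recalled around \eqref{at}, produce a single-point pullback attractor: first, the existence of a tempered random pullback absorbing set $\mathcal{B}(\omega)=\overline{B(0,\rho(\omega))}$ in $\cD$; and second, the pullback contraction of any two trajectories, which forces the attractor to collapse to one point. Throughout I write $c:=\max\{\|C\|,\|C\|^p\}$, fix a small $\delta>0$, and reserve the choice of $\epsilon$ (hence of $c$) for the very end.

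For the absorbing set I pass to the radial coordinate $r(t)=\|\varphi(t,\omega)x_0\|$ and $y=x/r$, exactly as in the proof of Theorem \ref{stabfSDElin}. Splitting $f(x)=f(0)+(f(x)-f(0))$ and using $\langle y,Ay\rangle\le -h_A$ together with $\|f(x)-f(0)\|\le c_f r$, equation \eqref{trans1} gives the affine differential inequality
\begin{equation*}
dr \le \big[-(h_A-c_f)\,r + \|f(0)\|\big]dt + \langle y, C y\rangle\, r\, d\omega .
\end{equation*}
The decisive feature is that $c_f$ is here combined with $h_A$ at the level of the equation, so it enters the final bound \emph{additively} through $h_A-c_f>0$, rather than being amplified by the unbounded noise factor (which is what a variation-of-constants kernel would do). Removing the multiplicative noise by the substitution underlying Corollary \ref{Phi} and applying the Gronwall Lemma \ref{lem2}, one obtains on each unit interval, after shifting by $\theta_j\omega$ and controlling the noise by $\delta+c\,\kappa$ via \eqref{kappa2}, the recursion
\begin{equation*}
r_{j+1}\le e^{-(h_A-c_f)+\delta+c\,\kappa(1,\theta_j\omega)}\, r_j + C_0\,\|f(0)\|\, e^{\delta+c\,\kappa(1,\theta_j\omega)}.
\end{equation*}
Iterating backwards along $\theta_{-1}\omega,\theta_{-2}\omega,\dots$ (cocycle property) bounds the pullback trajectory by a homogeneous term $r_{-n}\prod_{i=1}^{n}e^{-(h_A-c_f)+\delta+c\kappa(1,\theta_{-i}\omega)}$ plus a forcing series dominated by a constant multiple of $\xi(\omega)$ in \eqref{xi} with $h=h_A-c_f-\delta$. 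Choosing $\delta$ small and $\epsilon$ (hence $c$) small enough, Birkhoff's ergodic theorem applied to $\kappa$ (which has $E\kappa(1,\omega)<\infty$) makes the homogeneous term vanish for every tempered $x_0$, while the temperedness of $\xi$ gives $\varlimsup_n r_0^{(n)}\le \rho(\omega)$ with $\rho$ tempered. Thus $\mathcal{B}(\omega)=\overline{B(0,\rho(\omega))}$ is a tempered pullback absorbing set and \eqref{at} furnishes the attractor $\mathcal{A}\subset\mathcal{B}$.

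To show $\mathcal{A}(\omega)$ is a single point I estimate the difference $v=x_1-x_2$ of two solutions, which solves $dv=[Av+(f(x_1)-f(x_2))]dt+Cv\,d\omega$. Since $v\equiv0$ is a solution and $\|f(x_1)-f(x_2)\|\le c_f\|v\|$, this equation meets hypotheses (${\textbf H}_1$)--(${\textbf H}_3$) with $h\equiv h_A$ and Lipschitz rate $c_f$; the radial estimate of Theorem \ref{stabfSDElin} (equivalently Corollary \ref{Phi}, since the diffusion term has the same $\langle y,Cy\rangle$ form) yields $\log\|v(k+1)\|-\log\|v(k)\|\le -(h_A-c_f)+\delta+c\,\kappa(1,\theta_k\omega)$ per unit interval. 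Summing and using the ergodic theorem along $\theta_{-i}\omega$ gives
\begin{equation*}
\varlimsup_{n\to\infty}\tfrac1n\log\|\varphi(n,\theta_{-n}\omega)x_1-\varphi(n,\theta_{-n}\omega)x_2\| \le -(h_A-c_f)+\delta+c\,E\kappa(1,\omega)<0,
\end{equation*}
which is strictly negative once $\delta$ and $\|C\|$ are small (cf.\ \eqref{criterion1}), uniformly for $x_1,x_2\in\mathcal{B}(\theta_{-n}\omega)$ after absorbing the tempered initial factor $\tfrac1n\log\rho(\theta_{-n}\omega)\to0$. Hence the diameter of $\varphi(n,\theta_{-n}\omega)\mathcal{B}(\theta_{-n}\omega)$ decays exponentially, so $\mathcal{A}(\omega)=\{a(\omega)\}$ and every tempered set converges to $a(\omega)$ in the pullback sense at this exponential rate.

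The main obstacle is the tempered absorbing set, namely controlling the accumulation of the constant forcing $f(0)$ over the infinitely many unit intervals in the pullback limit. Two points make this delicate: one must keep $c_f$ additive through the affine inequality for $r$, and one cannot bound the forcing series through moments, since the factors $e^{c\kappa(1,\cdot)}$ already have infinite expectation; temperedness must instead be extracted pathwise from the ergodic average $\frac1n\sum_{i}\kappa(1,\theta_{-i}\omega)\to E\kappa(1,\omega)$, which is precisely the content of the temperedness of $\xi$ in \eqref{xi}. The remaining verifications — measurability and compactness of $\mathcal{B}$, and the cocycle property of $\varphi$ — are routine under the stated hypotheses.
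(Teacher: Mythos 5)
Your Step 2 (contraction of two trajectories) is sound and is essentially the paper's own argument: the difference $v=x_1-x_2$ solves a \emph{homogeneous} equation with $F(t,v)=f(v+x_2(t))-f(x_2(t))$, which satisfies $F(t,0)\equiv 0$ and the Lipschitz bound $c_f$, so the machinery of Theorem \ref{stabfSDElin} applies and the ergodic theorem yields the negative exponent. The genuine gap is in your Step 1. You pass to polar coordinates $r=\|x\|$, $y=x/r$ for the \emph{inhomogeneous} equation \eqref{fSDE}, but that decomposition is not available there. First, since $f(0)\neq 0$, $x\equiv 0$ is not a solution, so no uniqueness argument keeps trajectories away from the origin: a solution may reach $0$ (near such a point the equation reads $dx\approx f(0)\,dt$), and $y$ is then undefined. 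Second, and more quantitatively, your recursion $r_{j+1}\le e^{-(h_A-c_f)+\delta+c\kappa(1,\theta_j\omega)}r_j+C_0\|f(0)\|e^{\delta+c\kappa(1,\theta_j\omega)}$ requires the bound $\big|\int_s^t\langle y,Cy\rangle d\omega\big|\le \delta+c\,\kappa(t-s,\theta_s\omega)$, which by the Young--Loeve estimate needs a uniform control of $\ltn y\rtn_{q{\rm-var}}$ on unit intervals. In the proof of Theorem \ref{stabfSDElin} that control comes from Lemma \ref{lem70} applied to the angular equation \eqref{eqy}, and it uses crucially that $\|F(t,x)\|/\|x\|\le f(t)$ uniformly, i.e.\ that $F(t,0)=0$. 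For equation \eqref{fSDE} the corresponding drift term in \eqref{eqy} is $f(x(t))/\|x(t)\|$, whose sup-norm on $[k,k+1]$ is only bounded by $c_f+\|f(0)\|/\inf_{[k,k+1]}\|x(t)\|$ --- uncontrolled precisely in the regime where the solution is being absorbed toward the origin. So the Young-integral term in your affine inequality for $r$ cannot be estimated by $\delta+c\,\kappa$, and the claimed recursion does not follow.

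This is exactly why the paper treats Step 1 differently: it writes the solution by variation of constants, $x(t)=\Phi(t,\omega)x_0+\int_0^t\Phi(t-s,\theta_s\omega)f(x(s))\,ds$, where $\Phi$ is the fundamental matrix of the \emph{linear homogeneous} equation \eqref{AutoLinear}; Corollary \ref{Phi} (whose angular estimate is legitimate because that equation is homogeneous) gives $\|\Phi(t,\omega)\|\le \exp\{-h_At+\delta+\max\{\|C\|,\|C\|^p\}\kappa(t,\omega)\}$, and the scalar Gronwall Lemma \ref{lem2} applied to $z(t)=\|x(t)\|e^{h_At-\max\{\|C\|,\|C\|^p\}\kappa(t,\omega)}$ produces exactly the discrete recursion you wanted, with rate $h=h_A-c_fe^{\delta}-\delta>0$ (your ``additive $c_f$'' is recovered as $\delta\to 0$, so nothing is lost by this route). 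From that point on --- iteration over unit intervals, domination of the forcing series by the tempered variable in \eqref{b}/\eqref{xi}, the smallness condition \eqref{attractorcriterion} --- your outline coincides with the paper's. To repair your proof, replace the polar-coordinate derivation of the recursion by this variation-of-constants argument; the rest of what you wrote can stand.
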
 
\begin{proof}
We summarize the steps of the proof here. In {\bf Step 1} we prove \eqref{xest}, which helps to prove \eqref{xest1} in the forward direction and \eqref{xdiscrete} in the pullback direction, by choosing $\|C\|<\epsilon$ such that \eqref{attractorcriterion} is statisfied. As a result, there exists an absorbing set of the system which is a random ball with its radius described in \eqref{b}. The existence of the random attractor $\mathcal{A}$ is then followed. In {\bf Step 2}, we prove that any two different points $a_1,a_2$ in attractor $\mathcal{A}(\omega)$ can be pulled from fiber $\omega$ backward to fiber $\theta_{-t^*} \omega$, such that the difference of two solutions starting from fiber $\theta_{-t^*} \omega$ in fiber $\omega$ can be estimated by \eqref{y*}. Finally, using \eqref{btemper}, 
we conclude that $a_1(\omega) = a_2(\omega)$ almost surely, which proves that $\mathcal{A}$ is a single random point.  \\  

{\bf Step 1.} Fix a $\delta>0$ which will be specified later. We first show that there exists an absorbing set for system \eqref{fSDE}. Using \eqref{AutoLinear} and the method of variation of parameter as in \eqref{u}, one derives from \eqref{fSDE} the integral equation
\begin{equation*}
x(t,\omega,x_0) = \Phi(t,\omega)x_0 + \int_0^t \Phi(t-s,\theta_s \omega) f(x(s,\omega,x_0))ds,
\end{equation*}
where $\Phi$ defined in Corollary \ref{Phi}. 
Hence it follows from \eqref{phiest} and \eqref{kappa2} that for any $t \in [0,1]$
\allowdisplaybreaks
\begin{eqnarray*}
&&\|x(t,\omega,x_0)\| \\
&\leq& \| \Phi(t,\omega)x_0 \|+ \int_0^t \|\Phi(t-s,\theta_s \omega)\| \Big( c_f \|x(s,\omega,x_0)\| + \|f(0)\| \Big)ds \\
&\leq& \exp \Big\{ -h_A t + \delta + \max\{\|C\|, \|C\|^{p} \} \kappa (t,\omega) \Big\}\|x_0\| \\
&& + \int_0^t \exp \Big\{ -h_A (t-s) + \delta+\max\{\|C\|, \|C\|^{p} \}\kappa (t-s,\theta_s \omega) \Big\} 
\Big( c_f \|x(s,\omega,x_0)\| + \|f(0)\| \Big)ds \\
&\leq& \exp \Big\{ -h_A t +\delta + \max\{\|C\|, \|C\|^{p} \}\kappa (t,\omega) \Big\} \|x_0\| \\
&& + \int_0^t \exp \Big\{ -h_A (t-s) + \delta+ \max\{\|C\|, \|C\|^{p} \} \big[\kappa (t,\omega) -\kappa(s,\omega)\big] \Big\}\Big( c_f \|x(s,\omega,x_0)\| + \|f(0)\| \Big)ds. 
\end{eqnarray*}
Assign $z(t) :=  \|x(t,\omega,x_0)\| \exp \Big\{ h_A t - \max\{\|C\|, \|C\|^{p} \} \kappa(t,\omega)\Big \}$, then for any $t \in[0,1]$
\begin{eqnarray*}
z(t) &\leq& \|x_0\| e^{\delta} + \|f(0)\| e^{\delta} \int_0^t e^{h_A s - \max\{\|C\|, \|C\|^{p} \} \kappa(s,\omega)}ds  + \int_0^t c_f e^{\delta} z(s) ds,
\end{eqnarray*}
which has the form of \eqref{gronwall1}. By applying Gronwall lemma \ref{lem2}, we obtain
\begin{eqnarray*}
z(t) &\leq& \|x_0\| e^{\delta}  \exp \Big \{ c_fe^{\delta}t \Big\} + \|f(0)\|e^{\delta}   \int_0^t \exp \Big \{ c_f e^{\delta} (t-s) +h_A s - \max\{\|C\|, \|C\|^{p} \} \kappa(s,\omega)\Big\}ds, 
\end{eqnarray*}
for all $t\in [0,1]$. This follows that for any $t \in [0,1]$
\begin{eqnarray*}
&&\|x(t,\omega,x_0)\| \\
&\leq& \|x_0\|  \exp \Big \{ -h_A t+ \delta + \max\{\|C\|, \|C\|^{p} \} \kappa(t,\omega) +c_fe^{\delta}t \Big \} \notag \\
&&+ \|f(0)\|e^{\delta}   \int_0^t \exp \Big \{ c_f e^{\delta} (t-s) - h_A (t-s)+ \max\{\|C\|, \|C\|^{p} \} (\kappa(t,\omega) -\kappa(s,\omega))\Big\}ds \notag\\
&\leq& \|x_0\| \exp \Big \{ -\big(h_A - c_fe^{\delta} \big) t + \delta+ \max\{\|C\|, \|C\|^{p} \} \kappa(t,\omega)  \Big \}\notag \\
&&+ \|f(0)\|e^{\delta}  \int_0^t \exp \Big \{ -(h_A -c_f e^{\delta}) (t-s) + \max\{\|C\|, \|C\|^{p} \}(\kappa(t,\omega) -\kappa(s,\omega))\Big\}ds.
\end{eqnarray*}
Since $h_A > c_f$ there exists $\delta >0$ such that
\begin{eqnarray}\label{h3}
h := h_A - c_fe^{\delta} - \delta> 0.
\end{eqnarray}
Then for all $t\in [0,1]$
\allowdisplaybreaks
\begin{eqnarray}\label{xest}
&&\|x(t,\omega,x_0)\| \notag \\
&\leq&  \|x_0\| \exp \Big \{ -(h+\delta) t+ \delta + \max\{\|C\|, \|C\|^{p} \}\kappa(t,\omega)  \Big \}\notag \\
&&+ \|f(0)\|e^{\delta}   \int_0^t \exp \Big \{ -(h+\delta) (t-s)+ \max\{\|C\|, \|C\|^{p} \}(\kappa(t,\omega) -\kappa(s,\omega))\Big\}ds\notag\\
&\leq&  \|x_0\| \exp \Big \{ -(h+\delta)  t + \delta + \max\{\|C\|, \|C\|^{p} \} \kappa(1,\omega) \Big \}\notag\\
&&+ \|f(0)\|e^{\delta}  \int_0^t \exp \Big \{ -(h+\delta) (t-s)  + \max\{\|C\|, \|C\|^{p} \}\kappa(1,\omega)\Big\}ds\notag\\
&\leq& \|x_0\| \exp \Big \{ -(h+\delta)  t + \delta+ \max\{\|C\|, \|C\|^{p} \} \kappa(1,\omega) \Big \}+ \frac{\|f(0)\|}{h+\delta}\exp \Big\{\delta+\max\{\|C\|, \|C\|^{p} \}\kappa(1,\omega) \Big\},\notag \\  
\end{eqnarray}
as $\kappa$ is an increasing function of $t$.
In particular
\begin{eqnarray*}
\|x(1,\omega,x_0)\|
&\leq& \|x_0\| \exp \Big \{ -h + \max\{\|C\|, \|C\|^{p} \}\kappa(1,\omega) \Big \} \\
&&+ \frac{\|f(0)\|}{h+\delta}\exp \Big\{\delta + \max\{\|C\|, \|C\|^{p} \}\kappa(1,\omega)\Big\}  .
\end{eqnarray*}
Assign
\begin{eqnarray*}
\alpha(\omega) &:=& \exp \Big\{-h + \max\{\|C\|, \|C\|^{p} \} \kappa(1,\omega) \Big\},\\
\beta(\omega) &:=&\frac{\|f(0)\|}{h+\delta}\exp \Big\{\delta + \max\{\|C\|, \|C\|^{p} \} \kappa(1,\omega)\Big\}.
\end{eqnarray*}
By induction one can show that for any $n \geq 1$
\allowdisplaybreaks
\begin{eqnarray}\label{xest1}
&&\|x(n,\omega,x_0)\| \notag\\
&\leq&  \|x(n-1,\omega,x_0)\| \alpha(\theta_{n-1}\omega) + \beta(\theta_{n-1} \omega) \notag\\
&\leq& \ldots \notag\\
&\leq& \|x_0\| \prod_{k=0}^{n-1} \alpha(\theta_{k} \omega) + \sum_{k=0}^{n-1} \beta (\theta_{k} \omega)\prod_{i=k+1}^{n-1} \alpha(\theta_{i } \omega)\notag\\
&\leq& \|x_0\| \exp \Big\{\Big(-h+ \max\{\|C\|, \|C\|^{p} \} \frac{1}{n}\sum_{k=0}^{n-1}\kappa(1,\theta_{k}\omega) \Big)n \Big\}\notag\\
&&+ \sum_{k=0}^{n-1}\frac{\|f(0)\|}{h+\delta} e^{h+\delta} \exp \Big\{\Big(-h + \max\{\|C\|, \|C\|^{p} \} \frac{1}{n-k}\sum_{i=k}^{n-1}\kappa(1,\theta_{i }\omega)\Big)(n-k)\Big\}.
\end{eqnarray}
Using \eqref{xest} and \eqref{xest1}, we have for $t\in [(n, n+1]$ 
\begin{eqnarray}\label{xest2}
&&\|x(t,\omega,x_0)\|\notag\\ 
&\leq& \|x(n,\omega,x_0) \|\exp\Big\{-(h+\delta)(t-n) +\delta+\max\{\|C\|, \|C\|^{p} \}\kappa(1,\theta_{n}\omega) \Big\}\notag \\
&&+  \frac{\|f(0)\|}{h+\delta}\exp \big\{\delta +\max\{\|C\|, \|C\|^{p} \}\kappa(1,\theta_{n}\omega) \big\}\notag\\
&\leq & \|x_0\|e^{h+\delta}\exp \Big\{ \Big(-h +\max\{\|C\|, \|C\|^{p} \}\frac{1}{n+1}\sum_{k=0}^n\kappa(1,\theta_{k}\omega) \Big)(n+1)\Big\}\notag\\
&&+\sum_{k=0}^n\frac{\|f(0)\|e^{2(h+\delta)}}{h+\delta}\exp\Big\{ \Big(-h+\max\{\|C\|, \|C\|^{p} \}\frac{1}{n-k+1}\sum_{i=k}^n \kappa(1,\theta_{i}\omega)\Big)(n-k+1)\Big\}.\notag
\end{eqnarray}

By computation using  \eqref{kappa3} we obtain
\allowdisplaybreaks
\begin{eqnarray}\label{xcont1}
&&\|x(t,\theta_{-t}\omega,x_0)\| \notag\\
&\leq&  \|x_0\|e^{2h+\delta}\exp\Big\{ \Big(-h +2^{p+1}\max\{\|C\|, \|C\|^{p} \} \frac{1}{n+2}\sum_{k=0}^{n+1}\kappa(1,\theta_{-k}\omega) \Big)(n+2)\Big\}\notag\\
&&+\sum_{k=1}^{n+1}\frac{\|f(0)\|e^{3h+2\delta}}{h+\delta}\exp\Big\{ \Big(-h +2^{p+1}\max\{\|C\|, \|C\|^{p} \}\frac{1}{k+1}\sum_{i=0}^k \kappa(1,\theta_{-i}\omega)\Big)(k+1)\Big\}.\notag\\
\end{eqnarray}
Then for a fixed random set $\hat{D}(\omega) \in \cD$ with the corresponding ball $B(0,\rho(\omega))$ satisfying \eqref{tempered}, and for any random point $x_0(\theta_{-t}\omega) \in \hat{D}(\theta_{-t}\omega) $, we have
\allowdisplaybreaks
\begin{eqnarray}
&& \|x(t,\theta_{-t}\omega,x_0(\theta_{-t}\omega))\| \notag\\ 
&\leq& \|x_0(\theta_{-t}\omega)\| e^{2h+\delta}\exp\Big\{ \Big(-h+2^{p+1}\max\{\|C\|, \|C\|^{p} \}\frac{1}{n+2}\sum_{k=0}^{n+1}\kappa(1,\theta_{-k}\omega) \Big)(n+2)\Big\} \notag\\%
&&+  \frac{\|f(0)\|e^{3h+2\delta}}{h+\delta}  \sum_{k=1}^{n+1} \exp \Big\{\Big(-h +2^{p+1} \max\{\|C\|, \|C\|^{p} \} \frac{1}{k+1}\sum_{i=0}^{k}\kappa(1,\theta_{-i}\omega)\Big)(k+1)\Big\}\notag\\
&\leq& \rho(\theta_{-t}\omega) e^{2h+\delta} \exp\Big\{ \Big(-h+2^{p+1}\max\{\|C\|, \|C\|^{p} \}\frac{1}{n+2}\sum_{k=0}^{n+1}\kappa(1,\theta_{-k}\omega) \Big)(n+2)\Big\} + b(\omega),\notag
\end{eqnarray}
where 
\begin{eqnarray}\label{b}
b(\omega):= 1+ \frac{\|f(0)\|e^{3h+2\delta}}{h+\delta}  \sum_{k=1}^{\infty} \exp \Big\{\Big(-h+ 2^{p+1}\max\{\|C\|, \|C\|^{p} \} \frac{1}{k}\sum_{i=0}^{k-1}\kappa(1,\theta_{-i}\omega) \Big)k\Big\}.
\end{eqnarray}
Now we choose $\delta$ small enough such that \eqref{h3} holds and $C$ which satisfies 
\begin{equation}\label{attractorcriterion}
h = h_A - c_fe^{\delta} -\delta  > 2^{p+1}\max\{\|C\|, \|C\|^{p} \}   \max\{\frac{1}{\delta^{p-1}} ,4K G \} (\beta+\beta^p+\beta^2+ \beta^{p+1})
\end{equation}
and set $\lambda :=  h-2^{p+1}\max\{\|C\|, \|C\|^{p} \} E\  \kappa(1,\cdot)$.  There exists $n_0=n(\omega)$ such that  
$$\exp\Big\{ \Big(-h +2^{p+1}\max\{\|C\|, \|C\|^{p} \} \frac{1}{n+2}\sum_{k=0}^{n+1}\kappa(1,\theta_{-k}\omega) \Big)(n+2)\Big\}\leq e^{\frac{-\lambda (n+2)}{2}}$$
 for all $n\geq n_0$
 and $\rho(\theta_{-t}\omega) \leq e^{\frac{\lambda t}{4}}$ for all $t\geq n_0$ due to \eqref{tempered}. This follows that
\begin{eqnarray}\label{xdiscrete} \|x(t,\theta_{-t}\omega,x_0(\theta_{-t}\omega))\|&\leq& 2 b(\omega), \; 
\end{eqnarray}
for $n$ large enough and uniformly in random points $x_0(\omega) \in \hat{D}(\omega)$. 
This proves \eqref{absorb} and there exists a compact absorbing set $\mathcal{B}(\omega) = \overline{B}(0,2b(\omega))$ for system \eqref{fSDE}. Due to Lemma \ref{tempered} $b(\omega)$ is tempered when $\|C\|$ is small enough and thus $\mathcal{B} \in \cD$, this prove the existence of a random attractor $\mathcal{A}(\omega)$ of the form \eqref{at} for system \eqref{fSDE}. 

{\bf Step 2.} Assume that there exist two different points $a_1(\omega), a_2(\omega) \in \mathcal{A}(\omega)$. Fix $t^* \in [m, m+1]$ and put $\omega^*=\theta_{-t^*}\omega$ and consider the equation
\begin{equation}\label{equ.star}
dx(t) = [Ax(t) + f(x(t))]dt + Cx(t)d\omega^*(t).
\end{equation}
Note that \eqref{omegagrowth} holds for $\omega^*$.
By the invariance principle there exist two different points $b_1(\omega^*), b_2(\omega^*) \in \mathcal{A}(\omega^*)$ such that
\[
a_i(\omega) = x(t^*,\omega^*,b_i), \quad i = 1,2.
\]
Put $ y(t,\omega^*):= x(t,\omega^*,b_1)- x(t,\omega^*,b_2)$ then $y(t^*,\omega^*) =a_1(\omega)- a_2(\omega)$
and we have 
\begin{eqnarray*}
dy(t,\omega^*)&=& [Ay(t,\omega^*) + F(t,y(t,\omega^*))]dt + Cy(t,\omega^*)d\omega^*(t)
\end{eqnarray*}
where $F(t,y) = f(y + u(t)) -  f(u(t))$, where $u(t)=x(t,\omega^*,b_2)$ satisfies also globally linear growth \eqref{lipF} with coefficient $c_f$ and condition $F(t,0) \equiv 0$.\\
Now repeating the calculation in Theorem \ref{stabfSDElin} in which $\omega$ is replaced by $\omega^*$, we obtain 
\begin{eqnarray*}
\frac{1}{t^*}\log \| y(t^*,\omega^*)\|&\leq & \frac{1}{t^*}\log \| y(0,\omega^*)\| - (h_A-c_f) +\frac{K\|C\|}{m}\sum_{k=0}^m\ltn\omega^*\rtn_{p{\rm -var},\Delta_k}\\
&&+\frac{4K\|C\|G}{m} \Big(\sum_{k=0}^m\ltn\omega^*\rtn_{p{\rm -var},\Delta_k} + \sum_{k=0}^m\ltn\omega^*\rtn^{2}_{p{\rm -var},\Delta_k}+\sum_{k=0}^m\ltn\omega^*\rtn^{p+1}_{p{\rm -var},\Delta_k}\Big),
\end{eqnarray*}
in which $G$ given in \eqref{GA}. 
Using the fact that $\mathcal{A} \subset \mathcal{B}$, we have $\|y(0,\omega^*)\| \leq 4 b(\omega^*)$. Now letting $\N \ni t^* = m \to \infty$ and using \eqref{btemper}, we obtain
\begin{eqnarray}\label{y*}
&&\varlimsup_{t^*\to\infty}\frac{1}{t^*}\log \| y(t^*,\omega^*)\|\notag \\
&\leq & - (h_A-c_f) +K\|C\|(1+4G) \Big(E\ltn \omega\rtn_{p\rm{-var},[0,1]}
 +E\ltn \omega\rtn^{2}_{p\rm{-var},[0,1]}+E\ltn \omega\rtn^{p+1}_{p\rm{-var},[0,1]}\Big)\notag\\
&\leq&- (h_A-c_f)+K(1+4G)\|C\| \left( \beta+\beta^{2}+\beta^{p+1}\right),
\end{eqnarray}
in which $\beta$ is given by \eqref{beta1}. 
Hence, there exists $\eps>0$ such that if we choose $\|C\|<\eps$ then $y(t^*,\theta_{-t^*}\omega)$ converges to zero exponentially. Hence $a_1(\omega) - a_2(\omega) \to 0$ which is a contradiction. This proves that $\mathcal{A}(\omega) \equiv \{a(\omega)\}$ is a single random point. Finally similar arguments then prove that
$\|x(t,\theta_{-t}\omega,x_0(\theta_{-t}\omega))-a(\omega)\|$ converges to $0$ as $t\to \infty$ in an exponential rate and uniformly in random points $x_0(\omega)$ in a tempered random set $\hat{D}(\omega) \in \cD$, which proves the last conclusion of Theorem \ref{attractor}.
\end{proof}	

\begin{example}[Stochastic SIR model]
	Following \cite{caraballo2018}, consider a stochastic version of "susceptible-infected-recovered" epidemic model (SIR)
	\begin{eqnarray}\label{stochSIR}
	dS_t &=& \Big[q - a S_t + b I_t - \gamma \frac{S_tI_t}{S_t + I_t +R_t}\Big] dt + \sigma_1 S_t dB^H_t \notag\\
	dI_t &=& \Big[- (a+b+c) I_t +  \gamma \frac{S_tI_t}{S_t+ I_t +R_t} \Big] dt + \sigma_2 I_t dB^H_t \notag\\
	dR_t &=& \Big[c I_t - a R_t\Big] dt + \sigma_3 R_t dB^H_t,
	\end{eqnarray}
	where $q,a,b,c,\gamma, \sigma_1, \sigma_2, \sigma_3 \geq 0$. System \eqref{stochSIR} can be rewritten in the following form of variable $y = (S,I,R)^{\rm T} \in \R^3$
	\begin{eqnarray}\label{stochSIR2}
	d y_t &=& [A y_t + F(y_t)]dt + Cy_t dB^H_t \notag\\
	&=& \left[\left(\begin{matrix}
		-a & b & 0\\ 0& -a-b-c & 0 \\ 0 & c & -a
	\end{matrix} \right)y_t + \left(\begin{matrix}
	q-\gamma \frac{S_tI_t}{S_t+I_t+R_t}\\ \gamma \frac{S_tI_t}{S_t+I_t+R_t} \\ 0 
\end{matrix} \right)\right] dt + \left(\begin{matrix}
\sigma_1 & 0 & 0\\ 0& \sigma_2 & 0 \\ 0& 0 & \sigma_3 
\end{matrix} \right) y_t dB^H_t.
	\end{eqnarray}
It is easy to check that 
\[
\|F(y_1) - F(y_2) \| \leq \gamma \Big(|S_1-S_2|+|I_1 - I_2| + |R_1-R_2|\Big) \leq \gamma\sqrt{3} \|y_1 - y_2\|,\quad  \forall y_1, y_2 \in \R^3_+,
\]
hence $F$ is globally Lipschitz continuous. The existence and uniqueness, as well as the positiveness of the solution of \eqref{stochSIR} are investigated in \cite{caraballo2018} using fractional calculus for Young integral \cite{zahle, zahle2}.  \\
To study the asymptotic behavior of system \eqref{stochSIR}, observe from \cite{caraballo2018} that $A$ is diagonalizable, which can be written in the form
\[
A = P D P^{-1}, \quad D = \left(\begin{matrix}
-a & 0 & 0\\ 0& -a & 0 \\ 0 & 0 & -a-b-c
\end{matrix} \right), \quad P = \left(\begin{matrix}
1 & 0 & \frac{b}{b+c}\\ 0& 0 & -1 \\ 0 & 1 & \frac{c}{b+c}
\end{matrix} \right), \quad P^{-1} = \left( \begin{matrix} 1& \frac{b}{b+c}&0 \\ 0&  \frac{c}{b+c} & 1 \\ 0&-1&0
\end{matrix} \right).
\] 
Therefore, by assigning $x := P^{-1} y$ and applying the integration by parts for Young system, we obtain the equation for $x$ as follows 
\begin{eqnarray}\label{stochSIR3}
dx_t &=& \Big[P^{-1} A P x_t + P^{-1}F(Px_t)\Big] dt + P^{-1}CPx_t dB^H_t \notag\\
&=& [D x_t + F_1(x_t)]dt + P^{-1}CP x_t dB^H_t,
\end{eqnarray}
which has the form of \eqref{fSDE} with
\[
\langle x, Dx \rangle \leq - a \|x\|^2,\quad \|F_1(x_1) - F_1(x_2)\| \leq \gamma \sqrt{3} \|P\| \|P^{-1}\| \|x_1-x_2\| \leq 4\sqrt{3}\gamma \|x_1-x_2\|, \quad \forall x_1,x_2\in \R^3.
\]
We are now in the situation to apply Theorem \ref{attractor} provided that condition \eqref{attractorcriterion} is satisfied, i.e. $a -4\sqrt{3} \gamma$, $\delta>0$ such that $a -4\sqrt{3} \gamma e^\delta - \delta > 0$, and $\sigma_{\rm max}:=\max \{\sigma_1, \sigma_2,\sigma_3\}  \geq 0$ small enough such that 
\begin{eqnarray}
a -4\sqrt{3} \gamma e^\delta - \delta \geq 2^{p+1} \max\{4 \sigma_{\rm max}, (4 \sigma_{\rm max})^{p}\}  \max\{\frac{1}{\delta^{p-1}} ,4K G \} (\beta+\beta^p+\beta^2+ \beta^{p+1}).
\end{eqnarray}
Under this condition, there exists an one-point pullback attractor for the tranformed system \eqref{stochSIR3} and thus for the original system \eqref{stochSIR} after the transformation $y = Px$.
\end{example}

\section{Appendix}
\begin{proof}[Proof of Lemma \ref{lem2}]
From \eqref{gronwall1} it follows that
\begin{eqnarray*}
d \Big(e^{-\eta t} \int_a^t z(s) ds\Big)& = &e^{-\eta t} \Big(-\eta \int_a^t z(s) ds + z(t)\Big)\leq e^{-\eta t} \Big(z_0+ \int_a^t \alpha(s)ds\Big),\quad \forall t\in [a,b].
\end{eqnarray*}
As a result
\[
\int_a^t z(s) ds \leq 	\int_a^t e^{\eta (t-s)} \Big(z_0+ \int_a^s \alpha(u)du\Big)ds.
\]
Hence combining with \eqref{gronwall1} and using the integration by parts one gets
\begin{eqnarray*}
z(t) &\leq& z_0 + \int_a^t \alpha(s)ds + \eta 	\int_a^t e^{\eta (t-s)} \Big(z_0+ \int_a^s \alpha(u)du\Big)ds \\
&\leq & z_0 e^{\eta (t-a)} + \int_a^t \alpha(s)ds - e^{\eta t} \int_a^t \Big(\int_a^s \alpha(u)du\Big) d (e^{-\eta s}) \\
&\leq & z_0  e^{\eta (t-a)} + \int_a^t e^{\eta(t-s)} \alpha(s)ds,
\end{eqnarray*}
which proves \eqref{gronwall2}.\\
\end{proof}

\begin{proof}[Proof of Lemma \ref{tempered}]
Firstly, since the dynamical system $\theta$ is ergodic in $(\Omega,\mathcal{F},\mathbb{P})$, for almost all $\omega \in \Omega$
\begin{eqnarray}\label{lambda}
\lim \limits_{k \to \infty} \left(-h+c \frac{1}{k}\sum_{i=0}^{k-1}\kappa(1,\theta_{-k}\omega) \right)&=& -h+c E\  \kappa(1,\cdot) \notag\\
&\leq&-h+c \max\{\frac{1}{\delta^{p-1}} ,4K G \} (\beta+\beta^p+\beta^2+ \beta^{p+1})
\end{eqnarray}
due to \eqref{Ekappa}.
Set  $-\lambda :=  -h+c E\  \kappa(1,\cdot)$. Take and fix a small positive number $\varepsilon$ such that
\begin{equation}\label{criterion3}
h >\varepsilon  \max\{\frac{1}{\delta^{p-1}} ,4K G \} (\beta+\beta^p+\beta^2+ \beta^{p+1}). 
\end{equation}
Then for any $0<c<\varepsilon$ we have $\lim \limits_{k \to \infty} \left(-h+c \frac{1}{k}\sum_{i=0}^{k-1}\kappa(1,\theta_{-k}\omega) \right)= -\lambda < 0$.

Consequently, the series
\[
\sum_{k=1}^{\infty} \exp \Big\{\Big(-h+c \frac{1}{k}\sum_{i=0}^{k-1}\kappa(1,\theta_{-i}\omega) \Big)k\Big\}
\] 
converges or $\xi(\omega)$ is finite for almost all $\omega\in \Omega$.

Next we are going to prove that $\xi(\omega)$ is tempered if $c$ is small enough. Using \eqref{kappa3}, it suffices to prove that
\begin{equation}\label{btemper}
\lim_{t\to\pm\infty\atop t\in\Z}\frac{1}{t}\log \Big[\xi(\theta_{t}\omega)\Big] = 0
\end{equation}
whenever $c<\varepsilon$. Indeed, replacing $\omega$ by $\theta_{-m}\omega$ where $m\in\Z^+$ in \eqref{xi} we get 
\begin{eqnarray*}
&&\xi(\theta_{-m}\omega) \\
&=& 1+ \sum_{k=1}^{\infty} \exp \Big\{-h k+c \sum_{i=0}^{k-1}\kappa(1,\theta_{-(i+m)}\omega) \Big\}.\\
&=& 1+\sum_{k=1}^{\infty} \exp \Big\{-hk+c \sum_{i=m}^{k+m-1}\kappa(1,\theta_{-i}\omega) \Big\}.\\
&= & 1+ \exp\left\{\left(h- c \frac{1}{m}\sum_{i=0}^{m-1}\kappa(1,\theta_{-i}\omega)\right)m \right\}  \sum_{k=1}^{\infty} \exp \left\{\left(-h+c\frac{1}{k+m} \sum_{i=0}^{k+m-1}\kappa(1,\theta_{-i}\omega)\right) (k+m)\right\}.\\
\end{eqnarray*}
By \eqref{lambda}, for each $N\in \N^*$, $\frac{1}{N}<\lambda$ there exists $n(\omega,N)$ such that  for all $n>n(\omega,N)$
$$
-\lambda-\frac{1}{N}\leq -h +c \frac{1}{n}\sum_{k=0}^{n-1}\kappa(1,\theta_{-k}\omega) \leq -\lambda +\frac{1}{N},\,
$$
and 
$$
-\lambda-\frac{1}{N}\leq -h+c \frac{1}{n}\sum_{k=0}^{n-1}\kappa(1,\theta_{k}\omega) \leq -\lambda +\frac{1}{N}.
$$
Therefore, with  $N$, $\omega$ fixed, if $m>n(\omega,N)$ we have
\begin{eqnarray*}
1&\leq& \xi(\theta_{-m}\omega)\leq  1+  e^{(-\lambda+\frac{1}{N}) m} \sum_{k=1}^{\infty} \exp\{(-\lambda +1/N)(k+m)\}\leq (D+1) e^{2m/N},
\end{eqnarray*}
where $D=  \sum_{k=1}^{\infty} \exp\{(-\lambda +\frac{1}{N})k\} <\infty$ and $D$ is independent of $m$.  Hence, it follows that
\[
0\leq \varlimsup_{m\to+\infty\atop m\in \Z}\frac{1}{m}\log \Big[\xi(\theta_{-m}\omega)\Big] \leq \lim_{m\to\infty}\frac{2m}{m N} = \frac{2}{N},
\]
for any $N$ large enough, which proves \eqref{btemper} for the case $t\to -\infty$.  

Similarly,  replacing $\omega$ by $\theta_{m}\omega$ where $m\in\Z^+$ in \eqref{xi} we obtain
\allowdisplaybreaks
\begin{eqnarray*}
\xi(\theta_{m}\omega)
&=& 1+  \sum_{k=1}^{\infty} \exp \Big\{-hk+c \sum_{i=0}^{k-1}\kappa(1,\theta_{-i+m}\omega) \Big\}.\\
&=& 1+  \sum_{k=1}^{m} \exp \Big\{-hk+c \sum_{i=0}^{k-1}\kappa(1,\theta_{-i+m}\omega) \Big\}+ \sum_{k=m+1}^{\infty} \exp \Big\{-hk+\|C\| \sum_{i=0}^{k-1}\kappa(1,\theta_{-i+m}\omega) \Big\},\\
\end{eqnarray*}
in which the second term is
\allowdisplaybreaks
\begin{eqnarray*}
&&\sum_{k=1}^{m} \exp \Big\{-hk+c \sum_{i=0}^{k-1}\kappa(1,\theta_{-i+m}\omega) \Big\}\\
&&= \sum_{k=1}^{m} \exp \Big\{-hk+c \sum_{i=m-k+1}^{m}\kappa(1,\theta_{i}\omega) \Big\}\\
&=&\exp \Big\{-h(m+1)+c \sum_{i=0}^{m}\kappa(1,\theta_{i}\omega) \Big\}  \sum_{k=1}^{m}  \exp \Big\{hk-c \sum_{i=0}^{k-1}\kappa(1,\theta_{i}\omega) \Big\}\\
&=&\exp \Big\{-h(m+1)+c \sum_{i=0}^{m}\kappa(1,\theta_{i}\omega) \Big\}\\
&&\times \left( \sum_{k=1}^{n(\omega,N)}  \exp \Big\{hk -c \sum_{i=0}^{k-1}\kappa(1,\theta_{i}\omega) \Big\}+ \sum_{k=n(\omega,N)+1}^{m}  \exp \Big\{hk-c \sum_{i=0}^{k-1}\kappa(1,\theta_{i}\omega) \Big\}\right)\\
&\leq& \exp\{(-\lambda+\frac{1}{N})(m+1)\} \times\\
&&\times  \left(  \sum_{k=1}^{n(\omega,N)}  \exp \Big\{hk- c \sum_{i=0}^{k-1}\kappa(1,\theta_{i}\omega) \Big\} + \sum_{k=n(\omega,N)+1}^{m}   \exp\{(\lambda+\frac{1}{N})k\} \right) \\
&\leq & \exp\{(-\lambda+\frac{1}{N})(m+1)\} \left(  \sum_{k=1}^{n(\omega,N)}  \exp \Big\{hk- c \sum_{i=0}^{k-1}\kappa(1,\theta_{i}\omega) \Big\}+ \frac{  \exp\{(\lambda+\frac{1}{N})(m+1)\} }{e^{\lambda+\frac{1}{N}}-1}\right) \\
&\leq&  e^{\frac{2}{N}(m+1)}D(\omega),
\end{eqnarray*}
where 
\[
D(\omega) =  \sum_{k=1}^{n(\omega,N)}  \exp \Big\{hk-c \sum_{i=0}^{k-1}\kappa(1,\theta_{i}\omega) \Big\}\ + \frac{1}{e^{\lambda+\frac{1}{N}}-1} \] 
and $m>n(\omega,N)$.\\
On the other hand, the third term is
\allowdisplaybreaks
\begin{eqnarray*}
&&\sum_{k=m+1}^{\infty} \exp \Big\{-hk+c \sum_{i=0}^{k-1}\kappa(1,\theta_{-i+m}\omega) \Big\}\\
&&=\sum_{k=m+1}^{\infty} \exp \Big\{-hk+ c \sum_{i=0}^{m-1}\kappa(1,\theta_{-i+m}\omega)+ c \sum_{i=m}^{k-1}\kappa(1,\theta_{-i+m}\omega) \Big\}\\
&&= \exp \Big\{-hm+c \sum_{i=1}^{m}\kappa(1,\theta_{i}\omega)\Big\}  \sum_{k=1}^{\infty} \exp \Big\{-hk+c \sum_{i=0}^{k-1}\kappa(1,\theta_{-i}\omega) \Big\}\\
&&\leq  e^{(-\lambda+\frac{1}{N})m} \times \sum_{k=1}^{\infty} \exp \Big\{-hk+ c \sum_{i=0}^{k-1}\kappa(1,\theta_{-i}\omega) \Big\}
\end{eqnarray*}
when $m>n(\omega,N)$.
To sum up, for $m>n(\omega,N)$ we have
\begin{eqnarray*}
1&\leq\xi(\theta_{m}\omega)&\leq 1+ e^{\frac{2}{N}(m+1)}D(\omega) + e^{(-\lambda+\frac{1}{N})m} \xi(\omega)\leq e^{\frac{2}{N}(m+1)} \left( 1+ D(\omega) + \xi(\omega)\right).
\end{eqnarray*}
Since $D(\omega), \xi(\omega)$ are independent of $m$, $\varlimsup \limits_{m\to+\infty\atop m\in \Z} \frac{\xi(\theta_m\omega)}{m}\leq \frac{2}{N}$ for any $N$ large enough,  we conclude that $\xi(\omega)$ is tempered.
\end{proof}

\section*{Acknowledgment}
This work was partially sponsored by the Max Planck Institute for Mathematics in the Science (MIS-Leipzig) and  also by Vietnam National Foundation for Science and Technology Development (NAFOSTED) under grant number FWO.101.2017.01.

\end{document}